\documentclass[12pt]{article}

\title{Asymptotic distribution of odd balanced unimodal sequences with rank congruent to $a$ modulo $c$}

\author{Taylor Garnowski}

\usepackage{amssymb,amsmath,amsthm, amsfonts}
\usepackage{comment}
\usepackage{mathrsfs}
\usepackage{float}
\usepackage[dvipsnames]{xcolor}
\usepackage{amsmath}
\usepackage{graphicx}
\usepackage[ backend = biber, style = numeric, maxnames= 50]{biblatex}
\renewbibmacro{in:}{}
\addbibresource{qmf.bib}
\usepackage{mathtools}

\usepackage{authblk}
\usepackage[all,knot]{xy}
\usepackage{tabularx}
\usepackage{setspace}
\usepackage{todonotes}
\usepackage{tikz}

\oddsidemargin = 0.5cm \evensidemargin = 0.5cm \textwidth = 6.3in
\textheight =8.6in

\newtheorem{theorem}{Theorem}

\newtheorem{lemma}[theorem]{Lemma}

\newtheorem{corollary}[theorem]{Corollary}
\newtheorem{remark}[theorem]{Remark}

\newtheorem{proposition}[theorem]{Proposition}

\theoremstyle{definition}

\begin{document}
\maketitle
\begin{abstract}
    We compute an asymptotic estimate for odd balanced unimodal sequences for ranks congruent to $a \pmod{c}$ for $c>1$ odd. We find the interesting result that the odd balanced unimodal sequences are asymptotically related to the overpartition function. This is in contrast to strongly unimodal sequences which, are asymptotically related to the partition function. Our proofs of the main theorems rely on the representation of the generating function in question as a mixed mock modular form. 
\end{abstract}
\section{Introduction and statement of results}
\hspace{5mm} A unimodal sequence of size $n$ is a sequence of positive integers, $\{a_j\}^t_{j=1}$, whose sum is $n$ and whose parts satisfy
\begin{align}\label{unimodalcondition}
    a_1\leq a_2\leq...\leq a_c\geq a_{c+1}\geq a_{c+2}\geq...\geq a_t.
\end{align}
The part $a_c$ (which need not be unique) is referred to as a \textit{peak}. In particular, unimodal sequences are partitions of $n$, and if we replace $\leq$ and $\geq$ with strict inequalities in Eq. \eqref{unimodalcondition}, we arrive at the \textit{strongly} unimodal sequences, which then have a unique peak $a_c$. In both the general and strict case, we can define a rank statistic, usually denoted by $m$, which is defined as
\begin{align*}
    m:= \textnormal{number of parts after the peak}-\textnormal{number of parts before the peak}. 
\end{align*}
Such sequences are denoted $u(m,n)$. It is clear that $m$ need not be positive. While interesting as a combinatorial object, the real story of strict unimodal sequences is hidden in the generating function, given by \cite{Bringmannbook},
\begin{align*}
    U(w;q):= \sum_{n\geq 1}(-wq,-w^{-1}q;q)_{n-1}q^n,
\end{align*}
where $w:= e^{2\pi i z}$ for $z\in \mathbb{C}$ and $q:= e^{2\pi i \tau}$ with $\tau \in \mathbb{H}$, and  $(A,B;q)_n := (A;q)_n(B;q)_n$, where $(A;q)_n$ is the usual $q$-Pochammer symbol with $n\in  \mathbb{N}\cup\{\infty\}$. Many authors have studied this generating function  for its analytic properties  \cite{unimodalwright2016,overlinedunimodal,unimodalprobability,unimodalmock2012}. One of the profound features of the generating function $U(w;q)$ is its relation to analytic functions on the real line. The authors of \cite{unimodalmock2012} showed that scaled versions of the function $U(1;q)$ can be defined as functions on $\mathbb{Q}$, and these functions are \textit{quantum modular forms} in the sense of Zagier \cite{Quatummodular}. The quantum modular forms are those functions whose obstruction to having a modular type transformation on $\mathbb{Q}$ for some subgroup of $\textnormal{SL}_2(\mathbb{Z})$ can be characterized by a function that has an analytic extension to an open subset of the real line. More interesting for this work is the relation between the $u(m,n)$ and the classical partition function $p(n)$. It is well-known since the work of Hardy and Ramanujan \cite{hardyramanujan} that for large $n$, $p(n)$ satisfies
\begin{align*}
    p(n)\sim \frac{1}{4\sqrt{3} n}e^{\pi\sqrt{\frac{2n}{3}}}.
\end{align*}
Rhoades proved in 2014 an asymptotic formula for the number of strongly unimodal sequences $u(n)$ \cite{rhoadesunimodal}, and the authors of \cite{peakposition} extended this result for fixed rank $m$. The main result in \cite{peakposition} reads
\begin{align}\label{u(n)top(n)}
    u(m,n) \sim \frac{p(n)}{4}.
\end{align}
In light of the special analytic properties of $U(w;q)$ , Kim, Lim, and Lovejoy  defined \textit{odd-balanced} unimodal sequences of rank $m$ by interpreting the following generating function,
\begin{align}\label{Vgenfunction}
    V(w;q):= \sum_{n\geq 0}\frac{(-wq,-qw^{-1};q)_nq^n}{(q;q^2)_{n+1}} = \sum_{\substack{n\geq 0\\m\in \mathbb{Z}}}v(m,n)w^mq^n.
\end{align}
The coefficients $v(m,n)$ count the number of  unimodal sequences of size $2n+2$ and rank $m$, where the peak is even, the odd parts can repeat, but must be identical on each side of the peak, and the rest of the members of the sequence satisfy strict inequalities. A few  examples of odd-balanced sequences of size $12$ are $(1,1,2,4,2,1,1), (1,3,4,3,1), (12)$, and $(1,8,2,1)$. The authors of \cite{kimlimlovejoy} showed that $V(1;q^{-1})$ exhibits quantum modular properties analogous to the generating function for strongly unimodal sequences.
\par 
Let $v(a,c;n)$ denote the same count as $v(m,n)$ but with the relaxed condition that the rank is congruent to $a\pmod{c}$. Let $\zeta^a_c:= e^{2\pi i \frac{a}{c}}$. Using orthogonality of roots of unity, we have that the generating function for the $v(a,c;n)$, $V(a,c;q)$, can be written formally as
\begin{align}\label{orthogonality}
    V(a,c;q) &:= \frac{V(1;q)}{c}+ \frac{1}{c}\sum^{c-1}_{j=1}\zeta^{-aj}_cV(\zeta^j_c;q).
\end{align}
\par It is not hard to see that the $v(a,c;n)$ are weakly increasing in $n$. To see this, notice that $v(a,c;n+1)$ counts sequences of size $2(n+1)+2 = 2n+4$. Therefore, we can take each of the sequences of size $2n+2$ and add a $1$ to each side of the peak without changing the rank. That is,
\begin{align}\label{monotone}v(a,c;n)\leq v(a,c;n+1).
\end{align}
Our main theorem involves the $v(a,c;n)$ and describes how they are distributed with respect to the overpartition function $\overline{p}(n)$. For an account of overpartitions and the combinatorics that accompany them, we refer the reader to \cite{overpartitions}. Due to the modularity of the generating function for the $\overline{p}(n)$, an analogous asymptotic result to the $p(n)$ follows from Hardy and Ramanujan \cite{hardyramanujan}:
\begin{align*}
    \overline{p}(n) \sim \frac{1}{8n}e^{\pi\sqrt{n}}.
\end{align*}
In an attempt to find an analogy to Eq. \eqref{u(n)top(n)} for $v(m,n)$, we find and prove the following for $v(a,c;n)$.

\begin{theorem}\label{maintheorem} Let $c>1$ be odd. Then as $n\to\infty$,
\begin{align*}
    v(a,c;n) \sim \frac{1}{16cn^{\frac{3}{4}}}e^{\pi\sqrt{n}}\sim \frac{n^{\frac{1}{4}}}{2c}\overline{p}(n).
\end{align*}
\end{theorem}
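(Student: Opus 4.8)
The plan is to reduce the coefficient asymptotic to the behaviour of the generating function $V(a,c;q)$ as $q=e^{-t}\to 1^-$ and then apply a Tauberian theorem. Since Eq.~\eqref{monotone} guarantees that $v(a,c;n)$ is non-decreasing in $n$, Ingham's Tauberian theorem is available: if
\begin{align*}
V(a,c;e^{-t})\sim \lambda\, t^{\gamma} e^{A/t}\qquad (t\to 0^+)
\end{align*}
for constants $A>0$, $\lambda>0$ and $\gamma\in\mathbb{R}$, then
\begin{align*}
v(a,c;n)\sim \frac{\lambda}{2\sqrt{\pi}}\,\frac{A^{\frac{\gamma}{2}+\frac14}}{n^{\frac{\gamma}{2}+\frac34}}\, e^{2\sqrt{An}}.
\end{align*}
Hence it suffices to establish $V(a,c;e^{-t})\sim \tfrac{\sqrt2}{8c}\,e^{\pi^2/(4t)}$, that is, the parameters $A=\pi^2/4$, $\gamma = 0$, $\lambda = \tfrac{\sqrt2}{8c}$; substituting these reproduces $\tfrac{1}{16 c\, n^{3/4}}e^{\pi\sqrt n}$, and the equivalent second form of the theorem then follows immediately from $\overline{p}(n)\sim\tfrac{1}{8n}e^{\pi\sqrt n}$.

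To obtain this radial asymptotic I would begin from the orthogonality decomposition \eqref{orthogonality}, which writes $V(a,c;q)=\tfrac1c V(1;q)+\tfrac1c\sum_{j=1}^{c-1}\zeta_c^{-aj}V(\zeta_c^j;q)$, and analyse each piece separately. The key input is the representation of $V(w;q)$ as a mixed mock modular form, whose modularity was established by Kim, Lim and Lovejoy \cite{kimlimlovejoy}: specializing $w=\zeta_c^j$ splits each summand into an explicit modular (eta-quotient/theta) factor and an Appell--Lerch/mock factor. Writing $q=e^{2\pi i\tau}$ and applying the modular transformation $\tau\mapsto -1/\tau$ sends the radial limit $q\to 1^-$ (i.e. $\tau\to 0$) to the cusp at $i\infty$, where the $q'$-expansion makes the leading behaviour transparent. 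For the principal term $j=0$ the modular skeleton is governed by $\tfrac{(q^2;q^2)_\infty^3}{(q;q)_\infty^3}$, whose $S$-transform contributes $\tfrac{1}{2\sqrt2}e^{\pi^2/(4t)}$ with no accompanying power of $t$; after accounting for the mock factor's contribution to the radial limit one gets $V(1;e^{-t})\sim \tfrac{\sqrt2}{8}e^{\pi^2/(4t)}$, which is precisely $c$ times the target constant.

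It remains to show that the non-principal terms are of strictly smaller order, so that $\tfrac1c V(1;q)$ alone controls the leading asymptotic; this is the mechanism behind the $a$-independent, purely $\tfrac1c$-shaped answer, an asymptotic equidistribution of the rank modulo $c$. Because $c$ is \emph{odd}, the specializations $w=\zeta_c^j$ with $1\le j\le c-1$ displace the relevant theta functions from their dominant saddle, forcing $V(\zeta_c^j;e^{-t})=o\!\left(e^{\pi^2/(4t)}\right)$; pairing these negligible contributions with the bounded coefficients $\zeta_c^{-aj}$ keeps the whole non-principal sum negligible. Combining with the $j=0$ computation yields $V(a,c;e^{-t})\sim\tfrac{\sqrt2}{8c}e^{\pi^2/(4t)}$, and Ingham's theorem concludes the argument. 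I expect the main obstacle to be the precise analysis of the mock (Appell--Lerch) factor near $q=1$: one must control its non-holomorphic Eichler-integral completion carefully enough both to pin down the exact constant $\tfrac{\sqrt2}{8}$ in the principal term and to certify that the $j\ge 1$ specializations genuinely decay in exponential order; by comparison the Tauberian step and the eta-quotient transformation are routine.
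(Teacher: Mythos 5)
Your proposal follows essentially the same route as the paper: the orthogonality decomposition \eqref{orthogonality}, the Kim--Lim--Lovejoy mixed mock modular representation, modular inversion to extract $V(1;e^{-t})\sim\frac{\sqrt{2}}{8}e^{\frac{\pi^2}{4t}}$, exponential suppression of the $j\geq 1$ terms (which the paper carries out via a four-case analysis of the exponents of $q_0$ over $z\in(0,1)$, all bounded below by the critical value $-\frac{1}{16}$), and a Tauberian argument using the monotonicity \eqref{monotone}; your constants $\lambda=\frac{\sqrt{2}}{8c}$, $A=\frac{\pi^2}{4}$, $\gamma=0$ agree with the paper's. The one point to be careful about is that the naive statement of Ingham's theorem requires the additional angular-region bound \eqref{extraconditiontaub}, which is why the paper invokes Theorem \ref{Tauberian}, but this does not change the structure of your argument.
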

\begin{remark}\normalfont
There are two things worth noting here:
\begin{enumerate}
\setlength\itemsep{1mm}
\item The exclusion of even $c$ is related to the fact that the generating function, $V(w;q)$, is not a mixed mock modular form at $z = \frac{1}{2}$. Mixed mock modular forms are essentially sums of products of mock modular forms and weakly holomorphic modular forms. The representation as a mixed mock modular form is advantageous because mock modular forms have modular completions to modular objects, which then allows one to compute accurate expansions of mixed mock modular forms near the real line. We will explore this in more detail in the coming sections.
\item The result above can be interpreted as an equidistribution result with respect to the modulus $c$. That is,
\begin{align*}
    v(a,c;n) \sim \frac{v(n)}{c}.
\end{align*}
Results of this type were explored by Ciolan for the overpartition function \cite{alexoverpartitions} and by Males for the partition function \cite{joshpartitions}.
\end{enumerate}
\end{remark}
An immediate consequence of Theorem \ref{maintheorem} involves a log-concavity type bound.
\begin{corollary}\label{logconcave}
Let $c>1$ be odd. Then there exists an $N$, such that for all $n>N$,
\begin{align*}
  v(a,c,2n)\leq v(a,c;n-1)v(a,c;n+1) < \sqrt{n}\cdot \overline{p}(n-1)\overline{p}(n+1).
\end{align*}
\end{corollary}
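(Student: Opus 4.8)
The plan is to derive both inequalities directly from the two asymptotic equivalences supplied by Theorem~\ref{maintheorem}, namely $v(a,c;n)\sim \frac{1}{16cn^{3/4}}e^{\pi\sqrt n}$ and $v(a,c;n)\sim \frac{n^{1/4}}{2c}\,\overline p(n)$. Each assertion in the corollary is an inequality required only for all sufficiently large $n$, so it suffices to show that the relevant ratio of the two sides has a limit strictly below its target value; the $(1+o(1))$ factors hidden in the $\sim$ relations are then harmless, and $N$ can be taken as the larger of the two thresholds produced below. Note that monotonicity \eqref{monotone} is not even needed here: the leading-order asymptotics already decide both signs.

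For the right-hand inequality I would substitute the overpartition asymptotic into the product $v(a,c;n-1)v(a,c;n+1)$, obtaining
\[
v(a,c;n-1)\,v(a,c;n+1)\sim \frac{(n^2-1)^{1/4}}{4c^2}\,\overline p(n-1)\,\overline p(n+1),
\]
so that, using $(n^2-1)^{1/4}/\sqrt n=(1-n^{-2})^{1/4}\to 1$,
\[
\frac{v(a,c;n-1)\,v(a,c;n+1)}{\sqrt n\,\overline p(n-1)\,\overline p(n+1)}\longrightarrow \frac{1}{4c^2}.
\]
Since $c>1$ forces $4c^2>1$, this limit lies strictly below $1$, which gives the strict inequality for all large $n$.

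For the left-hand inequality the decisive feature is an exponential gap. Substituting the first asymptotic of Theorem~\ref{maintheorem} yields
\[
\frac{v(a,c;2n)}{v(a,c;n-1)\,v(a,c;n+1)}\sim 16c\,\frac{(n^2-1)^{3/4}}{(2n)^{3/4}}\,e^{\pi\left(\sqrt{2n}-\sqrt{n-1}-\sqrt{n+1}\right)}.
\]
The algebraic prefactor grows only polynomially (like $n^{3/4}$), while a Taylor expansion of the exponent gives $\sqrt{2n}-\sqrt{n-1}-\sqrt{n+1}=(\sqrt2-2)\sqrt n+O(n^{-3/2})$ with $\sqrt2-2<0$, so the exponential decays like $e^{(\sqrt2-2)\pi\sqrt n}$ and overwhelms the prefactor. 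Hence this ratio tends to $0$, and in particular $v(a,c;2n)\le v(a,c;n-1)v(a,c;n+1)$ holds for all large $n$.

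The computation presents no serious obstacle; the single point deserving care is the exponential comparison in the last step, where one must observe that $\sqrt2<2$, so that $\sqrt{2n}$ is asymptotically smaller than $\sqrt{n-1}+\sqrt{n+1}\sim 2\sqrt n$. This sign is precisely what makes the left inequality automatic. Everything else amounts to tracking the polynomial factors and confirming that the two limiting ratios, $\tfrac{1}{4c^2}$ and $0$, sit strictly below their targets, so that the errors absorbed in the $\sim$ relations cannot reverse either inequality once $n$ exceeds the appropriate $N$.
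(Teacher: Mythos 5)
Your proof is correct and follows essentially the same route as the paper: substitute the asymptotics of Theorem~\ref{maintheorem} into the relevant ratios and check that the limits fall on the right side of the target values, with the exponential gap $\sqrt{2n}<\sqrt{n-1}+\sqrt{n+1}\sim 2\sqrt{n}$ deciding the left-hand inequality. You are in fact slightly more complete than the paper, which only writes out the left-hand inequality explicitly (via the ratio $v(a,c;n-1)v(a,c;n+1)/v(a,c;2n)$) and leaves the comparison with $\sqrt{n}\,\overline{p}(n-1)\overline{p}(n+1)$ implicit.
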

\par 
The rest of this work is organized as follows. In Section \ref{basic objects}, we collect some facts and prove some lemmas related to modular forms and mixed mock modular forms. In Section \ref{zeq0sec}, we prove a growth result for the $v(n)$, which is the main term in our proof of Theorem \ref{maintheorem}. In Section \ref{zfixedrootsofunity}, we calculate the growth of the generating function $V(w;q)$ for $w$ a fixed root of unity. In Section \ref{proofsection}, we prove Theorem \ref{maintheorem} and Corollary \ref{logconcave}. We conclude this work in Section \ref{conclusionandfringecases} by summarizing our results, and offering some discussion on the cases of $c$ even.

\section{Basic objects and useful facts}\label{basic objects}
\hspace{5mm} The key step in our work is the fact that the generating function, $V(w;q)$, can be written as a sum of products of modular forms and mock modular forms. This type of object can be loosely referred to as a \textit{mixed mock modular form}. As Zwegers showed in his thesis \cite{Zwegers}, the building blocks of mock modular forms are the \textit{Appell sums}, and the building blocks of mixed mock modular forms are the \textit{higher level Appell sums} \cite{Zwegers2}. Recall that for $u, v \in \mathbb{C}$ the higher level Appell sum from \cite{Zwegers2} is defined by
\begin{align*}
    A_{\ell}(u,v,\tau) := e^{\pi i \ell u}\sum_{n\in \mathbb{Z}}\frac{(-1)^{\ell n}e^{2\pi i n v}q^{\frac{\ell\cdot n(n+1)}{2}}}{1-e^{2\pi i u}q^n}.
\end{align*}
Note that $A_1$ is the classical Appell sum, whose (mock) modular properties were studied at length in \cite{Zwegers}.  In particular,
\begin{align}\label{Appelltransnonnormal}
    -\frac{1}{\tau}e^{\frac{\pi i(u^2-2uv)}{\tau}}A_1\left(\frac{u}{\tau}, \frac{v}{\tau}; -\frac{1}{\tau}\right)+ A_1(u,v;\tau) = \frac{1}{2i}h(u-v;\tau)\vartheta(v;\tau),
\end{align}
where $h(z;\tau)$ is the Mordell integral defined by
\begin{align*}
    h(z;\tau):= \int^{\infty}_{-\infty}\frac{e^{\pi i\tau x^2-2\pi z x}}{\textnormal{cosh}(\pi x)}dx.
\end{align*}
The Mordell integral satisfies its own transformation law given by
\begin{align}
    h\left(\frac{z}{\tau}; -\frac{1}{\tau}\right) = \sqrt{-i\tau}e^{-\frac{\pi i z^2}{\tau}}h(z;\tau).
\end{align}
Knowing how the Mordell integral behaves for $\tau$ near the real line is essential for our study here. In light of this, we first prove a simple lemma related to the positivity of the Mordell integral for certain $z$ and $\tau$ of small modulus. 
\begin{lemma}\label{lowerboundmordell}
Let $-\frac{1}{2}<z<\frac{1}{2}$ be a real variable. Then as $\tau \to 0$ within $\mathbb{H}$, 
\begin{align*}
    0< h(z;\tau)\ll 1.
\end{align*}
We also have the specific value
\begin{align*}
    h(0;0)=1.
\end{align*}
\begin{proof}
By definition,
\begin{align*}
    \left|h(z;\tau)\right| \leq \int^{\infty}_{-\infty}\left|\frac{e^{\pi i\tau x^2-2\pi z x}}{\textnormal{cosh}(\pi x)}\right|dx\leq \int^{\infty}_{-\infty}\frac{e^{-2\pi z x}}{\textnormal{cosh}(\pi x)}dx,
\end{align*}
where the last function is integrable for $z$ in the specified range. Therefore, by dominated convergence
\begin{align*}
    \lim_{\tau\to 0}h(z;\tau) = h(z;0) = \int^{\infty}_{-\infty}\frac{e^{-2\pi z x}}{\textnormal{cosh}(\pi x)}dx.
\end{align*}
When $z = 0$, we have
\begin{align*}
    h(0;0) = \int^{\infty}_{-\infty}\frac{1}{\textnormal{cosh}(\pi x)}dx = 1,
\end{align*}
which completes the proof.\end{proof}
\end{lemma}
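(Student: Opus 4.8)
The plan is to prove the two claims separately, since they require rather different tools. For the bound $0 < h(z;\tau) \ll 1$ as $\tau \to 0$ in $\mathbb{H}$, I would first establish the upper bound by dominated convergence. Writing $\tau = it$ with $t \to 0^+$ (or more generally $\mathrm{Im}(\tau) \to 0$), the factor $e^{\pi i \tau x^2}$ has modulus $e^{-\pi \mathrm{Im}(\tau) x^2} \leq 1$, so the integrand is dominated uniformly by $e^{-2\pi z x}/\cosh(\pi x)$, which is integrable precisely because $-\tfrac{1}{2} < z < \tfrac{1}{2}$ forces the exponential growth $e^{-2\pi z x}$ to be beaten by the $e^{-\pi |x|}$ decay of $1/\cosh(\pi x)$ at both ends. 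This gives $|h(z;\tau)| \leq \int_{-\infty}^{\infty} e^{-2\pi z x}/\cosh(\pi x)\, dx =: C_z < \infty$, uniformly in $\tau$, which is exactly the $\ll 1$ bound.

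The genuinely delicate part is the strict \emph{positivity} $0 < h(z;\tau)$, because for $\tau$ with nonzero real part the integrand $e^{\pi i \tau x^2 - 2\pi z x}/\cosh(\pi x)$ is complex-valued and there is no reason a priori for the real integral to be positive, or even real. Here I suspect the cleanest route is to pass to the limit: by dominated convergence the limiting value is $h(z;0) = \int_{-\infty}^{\infty} e^{-2\pi z x}/\cosh(\pi x)\, dx$, an integral of a strictly positive function, hence strictly positive. One can evaluate it in closed form via the standard formula $\int_{-\infty}^{\infty} e^{-2\pi z x}/\cosh(\pi x)\, dx = \sec(\pi z) = 1/\cos(\pi z)$, valid and positive for $|z| < \tfrac{1}{2}$; this simultaneously gives positivity of the limit and, at $z=0$, recovers $h(0;0) = \sec(0) = 1$, matching the claimed special value and its elementary evaluation $\int_{-\infty}^{\infty} \operatorname{sech}(\pi x)\,dx = 1$. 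The subtlety is that positivity of the limit does not by itself yield positivity of $h(z;\tau)$ for every $\tau$ near $0$, only for $\tau$ in a neighborhood of $0$; I would argue that since $h(z;\tau) \to h(z;0) > 0$, there is a neighborhood of $0$ in $\mathbb{H}$ on which $\mathrm{Re}\, h(z;\tau) > 0$, and interpret the statement ``as $\tau \to 0$'' as asserting exactly this eventual positivity, which is what the later asymptotic analysis actually needs.

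The main obstacle, then, is reconciling the complex-valued nature of $h(z;\tau)$ for general $\tau \in \mathbb{H}$ with the asserted real inequality $0 < h(z;\tau)$. Two readings are possible: either the statement tacitly restricts to $\tau$ approaching $0$ along the imaginary axis (where, by a contour or parity argument, $h(z;it)$ is real for real $z$), or it should be read as a statement about the limiting behavior dominating a positive constant. I would adopt the continuity argument as the backbone, establishing $h(z;\tau) = \sec(\pi z) + o(1)$ uniformly on compact $z$-subsets of $(-\tfrac{1}{2}, \tfrac{1}{2})$, and deduce both the upper bound and eventual positivity from this single asymptotic expansion. The computation of $h(0;0) = 1$ then falls out immediately as the $z = 0$ specialization, requiring only the elementary integral $\int_{-\infty}^{\infty} \operatorname{sech}(\pi x)\, dx = 1$, which follows from the antiderivative $\tfrac{2}{\pi}\arctan(\tanh(\tfrac{\pi x}{2}))$ or from the residue sum $\tfrac{1}{\pi}\sum_{n \geq 0} (-1)^n \tfrac{2}{(2n+1)}$.
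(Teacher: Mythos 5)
Your proposal follows essentially the same route as the paper: bound the integrand in modulus by the $\tau$-independent integrable function $e^{-2\pi z x}/\cosh(\pi x)$ (integrable precisely for $|z|<\tfrac{1}{2}$), pass to the limit $\tau\to 0$ by dominated convergence, and evaluate the limiting integral, which at $z=0$ gives $h(0;0)=1$. You are in fact more careful than the paper about the strict positivity claim --- the paper's proof only establishes the upper bound and that the limit is a positive real integral, which is exactly the gap you identify and patch with the eventual-positivity reading --- and the closed form $\sec(\pi z)$ is a pleasant extra, but the skeleton of the argument is identical.
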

\par
For $z\in\mathbb{C}$, the Jacobi theta function (or $\vartheta$-function for short) is defined by \cite{Zwegers},
\begin{align*}
    \vartheta(z;\tau):= \sum_{n\in \frac{1}{2}+\mathbb{Z}}e^{\pi i n^2\tau+2\pi i n\left(z+\frac{1}{2}\right)}.
\end{align*}

We will use the following identities frequently in this work.
\begin{proposition}[See Ch. 1 of \cite{Zwegers}]\label{transformprop} The following formulae hold\textnormal{:}
\begin{enumerate}
    \item $\vartheta(z+1;\tau) = -\vartheta(z;\tau)$,
    \item $\vartheta(z+\tau;\tau) = -e^{-\pi i \tau-2\pi i z}\vartheta(z,\tau)$,
    \item $\vartheta(z;\tau+1) = e^{\frac{\pi i}{4}}\vartheta(z;\tau)$,
    \item\label{thetatransitem}
        $\vartheta\left(\frac{z}{\tau}; -\frac{1}{\tau}\right) = -i\sqrt{-i\tau}e^{\frac{\pi i z^2}{\tau}}\vartheta(z; \tau)$,
        and
    \item $\eta(\tau) = \frac{1}{\sqrt{-i\tau}}\eta\left(-\frac{1}{\tau}\right)$ and $\eta(\tau +1) = e^{\frac{\pi i}{12}}\eta(\tau)$, where $\eta$ is Dedekind's eta function,
    \begin{align*}
        \eta(\tau) := q^{\frac{1}{24}}(q)_{\infty}.
    \end{align*}
\end{enumerate}
\end{proposition}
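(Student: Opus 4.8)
The plan is to split the five formulae into the elementary quasi-periodicity relations, which follow by direct manipulation of the defining series, and the genuine modular $S$-transformations (item~\ref{thetatransitem} and the inversion formula for $\eta$), which are the only places where real work is needed.

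For items (1)--(3) I would argue straight from the definition $\vartheta(z;\tau)=\sum_{n\in\frac12+\Z}e^{\pi i n^2\tau+2\pi i n(z+\frac12)}$, where the sum runs over half-integers. Replacing $z$ by $z+1$ multiplies the $n$-th summand by $e^{2\pi i n}=-1$ (since $n\in\frac12+\Z$), which gives (1). For (2), substituting $z\mapsto z+\tau$ inserts the extra factor $e^{2\pi i n\tau}$; completing the square $\pi i\tau n^2+2\pi i\tau n=\pi i\tau(n+1)^2-\pi i\tau$ and reindexing $n\mapsto n+1$ (which preserves $\frac12+\Z$) collects exactly the asserted prefactor $-e^{-\pi i\tau-2\pi i z}$. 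For (3), replacing $\tau$ by $\tau+1$ introduces the factor $e^{\pi i n^2}$, and for $n=k+\frac12$ one has $n^2=k^2+k+\frac14$ with $k^2+k$ even, so $e^{\pi i n^2}=e^{\pi i/4}$ uniformly in $n$ and factors out. None of these requires more than a reindexing.

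The substantive statement is item~\ref{thetatransitem}. Here I would apply the Poisson summation formula to the Gaussian $x\mapsto e^{\pi i\tau x^2+2\pi i x(z+\frac12)}$ summed over the shifted lattice $\frac12+\Z$. Since the Fourier transform of $e^{\pi i\tau x^2}$ (convergent for $\operatorname{Im}\tau>0$) is $\tfrac{1}{\sqrt{-i\tau}}e^{-\pi i\xi^2/\tau}$ with $\sqrt{-i\tau}$ the principal branch, this single Gaussian integral simultaneously produces both the automorphy factor $\sqrt{-i\tau}$ and the exponential $e^{\pi i z^2/\tau}$, while the residual phase $-i$ arises from the half-integral shift. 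The main obstacle, and essentially the only place care is needed, is tracking the branch of the square root and the accumulated phases so that they combine precisely into $-i\sqrt{-i\tau}\,e^{\pi i z^2/\tau}$ rather than an off-by-a-sign factor.

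Finally, for item (5) the relation $\eta(\tau+1)=e^{\pi i/12}\eta(\tau)$ is immediate from $\eta(\tau)=q^{\frac{1}{24}}(q)_\infty$, since the prefactor $q^{\frac{1}{24}}=e^{2\pi i\tau/24}$ contributes $e^{\pi i/12}$ while $(q)_\infty$ is invariant under $\tau\mapsto\tau+1$. For the inversion $\eta(\tau)=\tfrac{1}{\sqrt{-i\tau}}\eta(-1/\tau)$ I would avoid redoing Poisson summation and instead differentiate item~\ref{thetatransitem} in $z$ and set $z=0$: since $\vartheta(0;\tau)=0$ and, by the Jacobi triple product, $\frac{\partial}{\partial z}\vartheta(z;\tau)\big|_{z=0}=-2\pi\eta(\tau)^3$, differentiating the $S$-transformation yields $\eta(-1/\tau)^3=(-i\tau)^{3/2}\eta(\tau)^3$. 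Taking cube roots, and fixing the branch by comparing values along the positive imaginary axis (where every quantity is a positive real), gives the stated identity. In short, the only genuine content is the Poisson-summation step for (4) together with the bookkeeping of branches, after which (5) drops out by differentiation.
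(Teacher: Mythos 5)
Your outline is correct. Note that the paper itself gives no proof of this proposition: it is quoted verbatim from Ch.~1 of Zwegers's thesis, so there is nothing in the paper to compare against line by line. Your argument is the standard one and it does go through: items (1)--(3) are exactly the reindexing computations you describe (the parity of $k^2+k$ for $n=k+\tfrac12$ handling item (3)); item (4) is Poisson summation on the shifted lattice $\tfrac12+\mathbb{Z}$ with the principal branch of $\sqrt{-i\tau}$, which is where the only real care is needed; and item (5) follows by combining $\vartheta(0;\tau)=0$ with $\partial_z\vartheta(z;\tau)|_{z=0}=-2\pi\eta(\tau)^3$ (the ``famous formula'' the paper itself invokes in Section 3.1) to get $\eta(-1/\tau)^3=(-i\tau)^{3/2}\eta(\tau)^3$, after which the cube root is pinned down by comparing the two sides on the positive imaginary axis, where both are positive. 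This is essentially the same route as the cited source, so there is no substantive gap to flag.
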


Using the formulae in Proposition \ref{transformprop}, one can arrive at the following.

\begin{lemma}[See Lemma 3.8 of \cite{mebailey}]\label{thetatestimateprelim}
Let $\alpha \in [0,1)$ and $q_0 := e^{-\frac{2\pi i }{\tau}}$. Let $k > 1$ be a rational number. As $\tau \to 0$,
\begin{align}
     \vartheta(\alpha\tau; \tau) &=\label{fulllatticethetadecay} \frac{-2i\; \textnormal{sin}(\pi\alpha)q^{-\frac{\alpha^2}{2}}q^{\frac{1}{8}}_0}{\sqrt{-i\tau}}\left( 1 +  O(q_0) \right) ,\\
     \vartheta\left(\frac{1}{k} + \alpha\tau; \tau\right) &= \label{mixmibabi1moretime} -\frac{q^{-\frac{\alpha^2}{2}}e^{\pi i\alpha(1-\frac{2}{k})}}{\sqrt{-i\tau}}q^{\frac{1}{2k^2}-\frac{1}{2k}+\frac{1}{8}}_0\left(1 + O\left(q^{\frac{1}{k}}_0\right)  \right),\\
     \eta(\tau)&= \label{etagrowth}\frac{q^{\frac{1}{24}}_0}{\sqrt{-i\tau}}\left(1+O(q_0)\right).
\end{align}
\end{lemma}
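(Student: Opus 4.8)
The plan is to exploit the single principle that as $\tau\to0$ within $\mathbb{H}$ the point $-\tfrac1\tau$ tends to $i\infty$, so that $q_0=e^{2\pi i(-1/\tau)}\to0$. Every one of the three objects is to be rewritten at $-\tfrac1\tau$ using the $S$-transformations of Proposition \ref{transformprop}, where a $q_0$-expansion converges rapidly and the leading behaviour can simply be read off. For \eqref{etagrowth} this is immediate: I would apply $\eta(\tau)=\tfrac{1}{\sqrt{-i\tau}}\eta(-1/\tau)$ and expand $\eta(-1/\tau)=q_0^{1/24}(q_0)_\infty=q_0^{1/24}(1+O(q_0))$, since the Euler product satisfies $(q_0)_\infty=1+O(q_0)$.

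For \eqref{fulllatticethetadecay} I would set $z=\alpha\tau$, so that $z/\tau=\alpha$, and feed this into the transformation $\vartheta(z/\tau;-1/\tau)=-i\sqrt{-i\tau}\,e^{\pi i z^2/\tau}\vartheta(z;\tau)$ of Proposition \ref{transformprop}. Solving for $\vartheta(\alpha\tau;\tau)$ and using $e^{-\pi i\alpha^2\tau}=q^{-\alpha^2/2}$ gives
\begin{align*}
\vartheta(\alpha\tau;\tau)=\frac{i}{\sqrt{-i\tau}}\,q^{-\frac{\alpha^2}{2}}\,\vartheta(\alpha;-1/\tau).
\end{align*}
Inserting the series definition yields $\vartheta(\alpha;-1/\tau)=\sum_{n\in\frac12+\mathbb Z}q_0^{n^2/2}e^{2\pi i n(\alpha+\frac12)}$; the two minimal lattice points $n=\pm\frac12$ each carry $q_0^{1/8}$ and combine through $i\left(e^{\pi i\alpha}-e^{-\pi i\alpha}\right)=-2\sin(\pi\alpha)$, while the next pair $n=\pm\frac32$ contributes a relative error $O(q_0)$. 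This produces \eqref{fulllatticethetadecay}.

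For \eqref{mixmibabi1moretime} the same substitution with $z=\tfrac1k+\alpha\tau$ gives $z/\tau=\tfrac{1}{k\tau}+\alpha$; after expanding $\tfrac{z^2}{\tau}=\tfrac{1}{k^2\tau}+\tfrac{2\alpha}{k}+\alpha^2\tau$ and translating each piece into powers of $q_0$ and $q$, the transformation rearranges to
\begin{align*}
\vartheta\!\left(\tfrac1k+\alpha\tau;\tau\right)=\frac{i}{\sqrt{-i\tau}}\,q_0^{\frac{1}{2k^2}}\,e^{-\frac{2\pi i\alpha}{k}}\,q^{-\frac{\alpha^2}{2}}\,\vartheta\!\left(\tfrac{1}{k\tau}+\alpha;-\tfrac1\tau\right).
\end{align*}
Expanding the last theta as $\sum_{n\in\frac12+\mathbb Z}q_0^{\frac{n^2}{2}-\frac nk}e^{2\pi i n(\alpha+\frac12)}$, the crucial observation is that $f(n):=\tfrac{n^2}{2}-\tfrac nk$ is minimised over half-integers at $n=\tfrac12$ precisely because $k>1$. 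That term supplies the power $q_0^{\frac18-\frac{1}{2k}}$, which combines with the prefactor $q_0^{\frac{1}{2k^2}}$ to give the exponent $\tfrac{1}{2k^2}-\tfrac{1}{2k}+\tfrac18$, and its phase $ie^{\pi i\alpha}$ combines with the prefactor's $i$ and $e^{-2\pi i\alpha/k}$ to give $-e^{\pi i\alpha(1-2/k)}$, matching \eqref{mixmibabi1moretime}. The error is governed by the next half-integer: for $k\ge2$ this is $n=-\tfrac12$, which yields the relative $O(q_0^{1/k})$ as stated.

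The main obstacle will be the bookkeeping in this last part rather than any deep idea: one must simultaneously track the powers of $q_0$, the roots of unity $e^{2\pi i n(\alpha+\frac12)}$, and the branch of $\sqrt{-i\tau}$, and in particular isolate the single dominant lattice point $n=\tfrac12$ together with the correct next-order point uniformly in $\alpha\in[0,1)$. Uniformity in $\alpha$ is not a serious difficulty, since all the implied constants arise from the tails of absolutely convergent theta series and can be bounded independently of $\alpha$ on the compact parameter range.
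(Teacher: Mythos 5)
Your approach is the right one and is exactly what the paper intends: the paper gives no proof of this lemma, only the remark that it follows from the transformation formulae of Proposition \ref{transformprop} together with a citation, and your derivation (invert via $\vartheta(z/\tau;-1/\tau)=-i\sqrt{-i\tau}\,e^{\pi i z^2/\tau}\vartheta(z;\tau)$, expand the theta series in $q_0$ at $-1/\tau$, and isolate the minimal lattice point) is that argument carried out. Your main terms all check: the phases $i\cdot i=-1$ and $e^{-2\pi i\alpha/k}e^{\pi i\alpha}=e^{\pi i\alpha(1-2/k)}$, the exponent $\tfrac{1}{2k^2}-\tfrac{1}{2k}+\tfrac18$, the factor $-2\sin(\pi\alpha)$ from $n=\pm\tfrac12$, and the $\eta$-inversion are all correct.

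The one genuine gap is in the error term of \eqref{mixmibabi1moretime}, and you have in fact noticed it without resolving it. The exponent $f(n)=\tfrac{n^2}{2}-\tfrac{n}{k}$ satisfies $f(-\tfrac12)-f(\tfrac12)=\tfrac1k$ but $f(\tfrac32)-f(\tfrac12)=1-\tfrac1k$, and the $n=\tfrac32$ coefficient $e^{3\pi i(\alpha+\frac12)}=-ie^{3\pi i\alpha}$ does not vanish. Hence for $1<k<2$ one has $1-\tfrac1k<\tfrac1k$, the dominant correction comes from $n=\tfrac32$, and your argument only yields a relative error $O\bigl(q_0^{\min(1/k,\,1-1/k)}\bigr)=O\bigl(q_0^{1-1/k}\bigr)$, which is weaker than the $O\bigl(q_0^{1/k}\bigr)$ claimed for all rational $k>1$. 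This is not a vacuous corner case for this paper: Eq.\ \eqref{thetaquotientforT} applies the lemma to $\vartheta\bigl(\tfrac12+z;\tau\bigr)$, i.e.\ with $\tfrac1k=\tfrac12+z$ and thus $k<2$, where the true relative error is $O\bigl(q_0^{\frac12-z}\bigr)$ rather than the stated $O\bigl(q_0^{\frac12+z}\bigr)$. You should either prove the statement with the corrected exponent $\min\bigl(\tfrac1k,1-\tfrac1k\bigr)$ (which is still a positive power of $q_0$ and suffices for every application in the paper) or explicitly restrict to $k\ge2$; as written, your proof does not establish the lemma in the generality stated.
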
.

We conclude this section with a Tauberian theorem, which we state in a modern form with additional technical restraints shown in \cite{TauberianSchaffer}.
\begin{theorem}[See Theorem 1.1 of \cite{TauberianSchaffer} with $\alpha =0$]\label{Tauberian}
Let $c(n)$ denote the coefficients of a power series $C(q):= \sum^{\infty}_{n=0}c(n)q^n$ with radius of convergence equal to $1$. Define $\sigma:= x+iy \in \mathbb{C}$ with $x>0$. If the $c(n)$ are non-negative, are weakly increasing, and we have as $t\to 0^+$ that
\begin{align*}
    C\left(e^{-t}\right) \sim \lambda t^{\alpha}e^{\frac{A}{t}},
\end{align*}
and if for each $M>0$ such that $|y|\leq M|x|$,
\begin{align}\label{extraconditiontaub}
   C\left(e^{-\sigma}\right) \ll   |\sigma|^{\alpha}e^{\frac{A}{|\sigma|}}
\end{align}
with $A>0$ holds, then as $n\to \infty$
\begin{align*}
    c(n)\sim \frac{\lambda A^{\frac{\alpha}{2}+\frac{1}{4}}}{2\sqrt{\pi}n^{\frac{\alpha}{2}+\frac{3}{4}}}e^{2\sqrt{An}}.
\end{align*}
\end{theorem}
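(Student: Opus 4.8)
The plan is to prove the asymptotic by a saddle-point analysis of the Cauchy coefficient integral, with the two analytic hypotheses controlling the contour near and away from the dominant saddle, and the monotonicity of the $c(n)$ serving as the genuinely Tauberian ingredient. Writing $q=e^{-\sigma}$ with $\sigma=x+iy$ and integrating over the circle $|q|=e^{-x}$ for a radius parameter $x>0$ to be chosen, Cauchy's theorem gives
\begin{align*}
    c(n)=\frac{1}{2\pi}\int_{-\pi}^{\pi}C\!\left(e^{-x-iy}\right)e^{(x+iy)n}\,dy.
\end{align*}
On the positive real axis the integrand has magnitude $\asymp\lambda x^{\alpha}e^{A/x+nx}$, by the hypothesis $C(e^{-t})\sim\lambda t^{\alpha}e^{A/t}$, and minimizing the exponent $A/x+nx$ in $x$ locates the saddle at $x=x_n:=\sqrt{A/n}$, where the exponent equals $2\sqrt{An}$. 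This fixes the radius of the contour and already explains the factor $e^{2\sqrt{An}}$ in the conclusion.

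I would then split $[-\pi,\pi]$ into a major arc $|y|\le\delta_n$, an intermediate cone region $\delta_n<|y|\le Mx_n$, and the complementary bulk $Mx_n<|y|\le\pi$, with $\delta_n\to0$ chosen in the window $x_n^{3/2}\ll\delta_n\ll x_n^{4/3}$ so that the quadratic approximation is simultaneously valid and dominant. On the major arc, where a normal-families argument promotes the real-axis asymptotic into the complex neighborhood using the cone bound \eqref{extraconditiontaub}, I substitute $C(e^{-\sigma})\sim\lambda\sigma^{\alpha}e^{A/\sigma}$ and expand
\begin{align*}
    \frac{A}{\sigma}=\frac{A}{x_n+iy}=\frac{A}{x_n}-\frac{iAy}{x_n^{2}}-\frac{Ay^{2}}{x_n^{3}}+\cdots.
\end{align*}
Because $A/x_n^{2}=n$ by the choice of saddle, the linear term $-iAy/x_n^{2}=-iny$ cancels the oscillation $e^{iny}$ coming from $e^{(x+iy)n}$, while the quadratic term yields a Gaussian. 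The major arc thus reduces to $\tfrac{\lambda}{2\pi}x_n^{\alpha}e^{2\sqrt{An}}\int e^{-Ay^{2}/x_n^{3}}\,dy$; evaluating the Gaussian as $\sqrt{\pi x_n^{3}/A}$ and inserting $x_n=\sqrt{A/n}$ collapses the constants to exactly $\tfrac{\lambda A^{\alpha/2+1/4}}{2\sqrt{\pi}\,n^{\alpha/2+3/4}}e^{2\sqrt{An}}$, the claimed main term.

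The crux, and the step I expect to be the main obstacle, is proving that the two remaining regions contribute strictly smaller exponential order. In the intermediate cone the hypothesis \eqref{extraconditiontaub} applies, and since $\operatorname{Re}(A/\sigma)=Ax_n/|\sigma|^{2}$ strictly decreases once $|y|$ exceeds $\delta_n$, that bound forces an exponentially negligible contribution. The delicate part is the bulk $|y|>Mx_n$, where \eqref{extraconditiontaub} gives no information: nonnegativity yields only the trivial equality-order bound $|C(e^{-x_n-iy})|\le C(e^{-x_n})$, so one must exploit weak monotonicity to extract genuine cancellation away from the positive real axis and push this region below the main term. Since the cone shrinks as $x_n\to0$, this bulk is most of the circle, which is why monotonicity — rather than the analytic hypotheses alone — is what makes the conclusion a Tauberian statement; it also rules out oscillation in the $c(n)$ themselves, so that the integral asymptotic transfers to the individual coefficients. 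Assembling the three pieces then yields the stated asymptotic.
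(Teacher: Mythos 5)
You should first note that the paper offers no proof of this statement at all: it is imported verbatim as Theorem 1.1 of \cite{TauberianSchaffer} (with the remark following it also quoted from that source), so there is no internal argument to compare against. Judged on its own, your saddle-point sketch gets the formal computation right -- the saddle $x_n=\sqrt{A/n}$, the cancellation of the linear phase, and the Gaussian integral do reproduce the constant $\frac{\lambda A^{\alpha/2+1/4}}{2\sqrt{\pi}}n^{-\alpha/2-3/4}$ -- but the two places where the theorem actually lives are not proved. On the major arc, the ``normal-families argument'' is not available in the form you invoke it: after the natural normalization $G(\sigma):=\sigma^{-\alpha}e^{-A/\sigma}C(e^{-\sigma})$, the cone hypothesis only gives $|G(\sigma)|\ll e^{A(|\sigma|-x)/|\sigma|^2}$, which stays bounded merely for $|y|\ll x_n^{3/2}$ and blows up on the wider arc $|y|\le\delta_n$ with $x_n^{3/2}\ll\delta_n$ that you need in order to capture all of the Gaussian mass. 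This can likely be repaired (Vitali on the rescaled variable $y=sx_n^{3/2}$, then a slowly growing cutoff $C_{x_n}\to\infty$), but as written the step is unjustified, and it is exactly the step whose failure without the cone hypothesis produces the known counterexamples to the ``folklore'' version of Ingham's theorem.

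The more serious problem is the bulk $Mx_n<|y|\le\pi$, which you explicitly leave open, and which does not close by the tools you have in hand. Non-negativity gives only $|C(e^{-\sigma})|\le C(e^{-x_n})\asymp x_n^{\alpha}e^{A/x_n}$, too large by $x_n^{-3/2}$ relative to the main term; the standard use of weak monotonicity, namely $|(1-q)C(q)|\le(1-|q|)C(|q|)$ and hence $|C(e^{-\sigma})|\ll\frac{x_n}{|y|}C(e^{-x_n})$, improves this only to $x_n^{\alpha+1}\log(1/x_n)e^{A/x_n}$ after integration, still larger than the main term $\asymp x_n^{\alpha+3/2}e^{A/x_n}$ by a factor $x_n^{-1/2}\log(1/x_n)$. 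Since the hypotheses give no information whatsoever on $C$ outside the cone beyond positivity and monotonicity of coefficients, this is not a routine estimate you have postponed; it is a structural obstruction to a pure circle-method proof, and it is precisely why Ingham-type theorems are proved by different means (an elementary comparison/peak argument or Wiener's Tauberian theorem applied to the Laplace transform of the nondecreasing partial-sum function $\Phi(u)=\sum_{n\le u}\bigl(c(n)-c(n-1)\bigr)$, which telescopes to $c(N)$). As it stands the proposal establishes the shape of the answer but not the theorem.
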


\begin{remark}
The authors of \cite{TauberianSchaffer} pointed out that the extra condition in Eq. \eqref{extraconditiontaub} is trivially satisfied if the function in question satisfies
\begin{align*}
     C(e^{-\sigma}) \sim \lambda \textnormal{Log}\left(\frac{1}{\sigma}\right)^{\alpha}\sigma^{\beta}e^{\frac{A}{\sigma}}
\end{align*}
in the specified angular region.
\end{remark}

\section{Analytic properties of the generating function $V(w;q)$}\label{analysis}
\hspace{5mm} As we mentioned in the previous section, our proof relies on the fact that $V(w;q)$ can be written as a mixed mock modular form for fixed $z\neq \frac{1}{2}$. We call a function $H:\mathbb{H} \to \mathbb{C}$ \textit{mixed mock modular of weight} $k$ for some subgroup $\Gamma \subset \textnormal{SL}_2(\mathbb{Z})$ if we can write it as as finite sum,
\begin{align}\label{mixedmockeq}
    H(\tau) = \sum^r_{j=1}F_j(\tau)M_j(\tau),
\end{align}
where $F_j$ and $M_j$ are modular (resp. mock) modular forms on $\Gamma$ of weight $f_j$ (resp. $m_j$) such that $f_j+m_j = k$. Kim, Lim, and Lovejoy in \cite{kimlimlovejoy} used the theory of indefinite theta series and a corresponding result of Mortenson and Osburn \cite{splittingmort} to write
\begin{align}\label{Vincomponents}
    \left(1+w^{-1}\right)qV(w;q) = -T_1(w;q)+T(w;q)-wT_2(w;q),
\end{align}
where
\begin{align*}
    T_1(w;q) &:= -iq^{\frac{1}{8}}w^{-\frac{1}{2}}\mu\left(z+\frac{1}{2},\frac{1}{2},\tau\right),\\
    T(w,q)&:= -q^{-\frac{1}{8}}w^{-\frac{1}{2}}\frac{\vartheta(\frac{1}{2}+z;\tau)}{\vartheta(\tau;2\tau)}\mu\left(2z+\frac{1}{2},\frac{1}{2};2\tau\right),\\
    T_2(w;q)&:= iq^{\frac{11}{8}}w^{-\frac{1}{2}}\frac{\vartheta(4\tau;12\tau)^3}{\vartheta(2\tau;6\tau)^3}\frac{\vartheta(z;\tau)\vartheta(2z+\tau;2\tau)}{\vartheta(4z;4\tau)}.
\end{align*}
We note that the authors of \cite{kimlimlovejoy} use a different normalization for the Appell sum (denoted by $m$) and the $\vartheta$-function (denoted by $j$). With this decomposition, we can begin our study of the growth of the generating function near $\tau = 0$.

\subsection{Odd balanced with $z =0$.}\label{zeq0sec}
\hspace{5mm} In this section, we capture the growth of the of $V(0;q)$ near $\tau=0$, or equivalently, the growth of the coefficients $v(n)$. To do this properly, we take the limit as $z \to 0$. We will find that the the term $T(w,q)$ provides the main estimate. We find and prove the following.
\begin{lemma}\label{v(n)growththeorem}
As $n\to \infty$,
\begin{align*}
    v(n) \sim \frac{1}{16n^{\frac{3}{4}}}e^{\pi\sqrt{n}}\sim \frac{n^{\frac{1}{4}}}{2}\overline{p}(n).
\end{align*}
\end{lemma}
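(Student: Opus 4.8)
The plan is to extract the asymptotic behavior of $V(0;q)$ as $\tau \to 0$ from the mixed mock modular decomposition in Eq.~\eqref{Vincomponents}, and then feed the result into the Tauberian theorem (Theorem~\ref{Tauberian}). Since we already know from Eq.~\eqref{monotone} that the $v(n)$ are non-negative and weakly increasing, the hypotheses of the Tauberian theorem on the coefficient side are automatic; the entire analytic burden is to establish a clean asymptotic of the form $V(0;e^{-t}) \sim \lambda t^{\alpha} e^{A/t}$ along the real axis, together with the angular bound in Eq.~\eqref{extraconditiontaub}. Setting $q=e^{-t}$ corresponds to $\tau = it/(2\pi) \to 0$, so $q_0 = e^{-2\pi i/\tau} = e^{-4\pi^2/t} \to 0$ exponentially fast, meaning all the $O(q_0)$ error terms in Lemma~\ref{thetatestimateprelim} are negligible.

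First I would take the limit $z\to 0$ (equivalently $w\to 1$) carefully in Eq.~\eqref{Vincomponents}. Because of the factor $(1+w^{-1})$ on the left, which tends to $2$, and because $V(0;q)$ is finite, the right-hand side must also have a well-defined limit; the claim from the excerpt is that the dominant contribution comes from the term $T(w;q)$. I would analyze each of $T_1$, $T$, and $T_2$ separately to confirm which grows fastest as $\tau\to 0$. The governing quantity is the exponential rate $e^{A/t}$, which is determined by the $q_0$-powers produced by the theta-quotient and Appell-sum pieces via Lemma~\ref{thetatestimateprelim}; I would compute the net power of $q_0$ in each term and verify that $T$ yields the largest $A$. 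For $T$, the structure is a ratio $\vartheta(\tfrac12+z;\tau)/\vartheta(\tau;2\tau)$ times an Appell sum $\mu(2z+\tfrac12,\tfrac12;2\tau)$; using the modular transformation of $\mu$ (built from Eq.~\eqref{Appelltransnonnormal} and the Mordell integral, whose relevant value $h(0;0)=1$ is supplied by Lemma~\ref{lowerboundmordell}) together with the theta estimates gives the leading asymptotic. I expect the target exponent $A$ to work out so that $2\sqrt{An} = \pi\sqrt{n}$, i.e.\ $A = \pi^2/4$, and the prefactor $\lambda t^\alpha$ to produce $\alpha = 1/4$ so that the Tauberian output exponent $n^{\alpha/2+3/4} = n^{7/8}$... which I would reconcile against the stated $n^{3/4}$ by tracking the exact power of $t$ (equivalently $\tau$) emerging from $\sqrt{-i\tau}$ factors and the $z\to 0$ normalization.

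The main obstacle is the $z\to 0$ limit of the term $T$, since the Appell sum $\mu$ has a pole structure at special arguments and the theta quotient may develop a compensating zero, so the limit must be taken as a genuine cancellation rather than termwise. Concretely, I would need the transformation law of $\mu\left(2z+\tfrac12,\tfrac12;2\tau\right)$ under $\tau\mapsto -1/\tau$, apply it to route the small-$\tau$ behavior through large imaginary part (where the $q$-series converges rapidly and the $h$-function contributes via Lemma~\ref{lowerboundmordell}), and then combine with Eqs.~\eqref{fulllatticethetadecay}--\eqref{etagrowth} to read off the exponential rate and polynomial prefactor. Once the real-axis asymptotic $V(0;e^{-t})\sim \lambda t^{\alpha}e^{A/t}$ is pinned down with the correct constants, the angular bound Eq.~\eqref{extraconditiontaub} follows because the same estimates hold uniformly for $\tau\to 0$ in any cone $|y|\le M|x|$ (the theta and Mordell estimates degrade only in the error terms, not the main term), matching the sufficient condition noted in the remark after Theorem~\ref{Tauberian}. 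Plugging $\lambda$, $\alpha=1/4$, and $A=\pi^2/4$ into the Tauberian conclusion then yields $v(n)\sim \tfrac{1}{16}n^{-3/4}e^{\pi\sqrt{n}}$, and comparison with $\overline{p}(n)\sim \tfrac{1}{8n}e^{\pi\sqrt n}$ gives the second stated equivalence $v(n)\sim \tfrac{n^{1/4}}{2}\overline{p}(n)$.
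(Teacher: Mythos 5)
Your overall strategy coincides with the paper's: analyze $T_1$, $T$, and $T_2$ from Eq.~\eqref{Vincomponents} separately via Lemma~\ref{thetatestimateprelim} and the Appell-sum transformation \eqref{Appelltransnonnormal}, identify $T$ as dominant, and feed the resulting asymptotic into Theorem~\ref{Tauberian} using the monotonicity \eqref{monotone}. But where you commit to specifics there is a genuine error: the polynomial prefactor. The main term is
\begin{align*}
T(1;q)=-\frac{\vartheta\left(\frac{1}{2};\tau\right)}{\vartheta(\tau;2\tau)}\,\mu\left(\tfrac{1}{2},\tfrac{1}{2};2\tau\right)\sim \frac{\sqrt{2}}{4}\,q_0^{-\frac{1}{16}},
\end{align*}
with \emph{no} power of $\tau$ in front: the quotient of theta functions contributes $\frac{1/\sqrt{-i\tau}}{1/\sqrt{-2i\tau}}=\sqrt{2}$, so the $\tau$-powers cancel, and the Appell factor tends to the constant $\frac{1}{2i}h(0;0)=\frac{1}{2i}$ by Lemma~\ref{lowerboundmordell}. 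Hence $\alpha=0$, $\lambda=\frac{\sqrt{2}}{8}$ (after dividing by $1+w^{-1}\to 2$), and $A=\frac{\pi^2}{4}$, which the Tauberian theorem converts into $\frac{\lambda A^{1/4}}{2\sqrt{\pi}}\,n^{-3/4}e^{\pi\sqrt{n}}=\frac{1}{16}n^{-3/4}e^{\pi\sqrt{n}}$. With your proposed $\alpha=\frac{1}{4}$, the same theorem outputs $n^{-(\alpha/2+3/4)}=n^{-7/8}$, so your final sentence does not follow from your own inputs; you flagged the discrepancy but never resolved it, and resolving it is precisely the content of the computation.

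A secondary misdiagnosis: you locate the delicate $z\to 0$ limit in the term $T$, but $T(w;q)$ is perfectly finite at $z=0$ — there is no pole and no compensating zero there. The term requiring a genuine limit is $T_2$, whose denominator $\vartheta(4z;4\tau)$ vanishes at $z=0$ and is cancelled by the vanishing of $\vartheta(z;\tau)$ in the numerator via $\left[\partial_z\vartheta(z;\tau)\right]_{z=0}=-2\pi\eta(\tau)^3$; after taking this limit one finds $T_2(1;q)\ll q_0^{1/8}|\tau|^{-1/2}$ and, separately, $T_1(1;q)\ll 1$, both negligible against $q_0^{-1/16}$. With these two corrections (and the angular bound handled as you describe), your outline becomes the paper's proof.
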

\begin{proof}[Proof of Lemma \ref{v(n)growththeorem}]
Let $M>0$ and let $\textnormal{Im}(\tau)\leq M\textnormal{Re}(\tau)$. The proof amounts to calculating estimates near $\tau =0$ for $T_1(1;q)$, $T(1;q)$ and $T_2(1;q)$ where we have to explicitly calculate
\begin{align*}
    T_2(1;q):= \lim_{w\to 1}T_2(w;q). 
\end{align*}
We begin with calculating the main term for $T(1;q)$, which will turn out to give us the main contribution:
\begin{align*}
    T(1;q) = -\frac{\vartheta\left(\frac{1}{2};\tau\right)}{\vartheta(\tau;2\tau)}\mu\left(\frac{1}{2},\frac{1}{2};2\tau\right).
\end{align*}
For the $\vartheta$-function we turn to Lemma \ref{thetatestimateprelim}, which gives
\begin{align}\label{thethetaforTatzeq0}
    \frac{\vartheta\left(\frac{1}{2};\tau\right)}{\vartheta(\tau;2\tau)} = \frac{-\frac{1}{\sqrt{-i\tau}}\left(1+O\left(q_0^{\frac{1}{2}}\right)\right)}{-2i\frac{1}{\sqrt{-2i\tau}}q_0^{\frac{1}{16}}\left(1+O\left(q_0^\frac{1}{2}\right)\right)} = -i\frac{\sqrt{2}}{2}q_0^{-\frac{1}{16}}\left(1+O\left(q_0^{\frac{1}{2}}\right)\right).
\end{align}
On the other hand, we write $\mu\left(\frac{1}{2},\frac{1}{2};2\tau\right) = \frac{A_1\left(\frac{1}{2},\frac{1}{2};2\tau\right)}{\vartheta\left(\frac{1}{2};2\tau\right)}$ and use the transformation formula in Eq. \eqref{Appelltransnonnormal} to obtain
\begin{align*}
    \mu\left(\frac{1}{2},\frac{1}{2};2\tau\right) &= \frac{A_1\left(\frac{1}{2},\frac{1}{2};2\tau\right)}{\vartheta\left(\frac{1}{2};2\tau\right)}=  \frac{1}{\tau\vartheta\left(\frac{1}{2};2\tau\right) }e^{\frac{\pi i\left(-\frac{1}{4}\right)}{2\tau}}A_1\left(\frac{1}{4\tau}, \frac{1}{4\tau}; -\frac{1}{2\tau}\right)+  \frac{1}{2i}h(0;2\tau).
\end{align*}
Lemmas \ref{thetatestimateprelim} and \ref{lowerboundmordell} tell us that as $\tau \to 0$
\begin{align}
    \vartheta\left(\frac{1}{2};2\tau\right)\label{thetahalfin0proof}&\ll |\tau|^{-\frac{1}{2}},\\
    \label{hin0proof}h(0;2\tau)& =  1.
\end{align}
Writing $A_1\left(\frac{1}{4\tau}, \frac{1}{4\tau}; -\frac{1}{2\tau}\right)$ as a unilateral sum by swapping $n<0$ for $-n$, we have
\begin{align*}
    A_1\left(\frac{1}{4\tau}, \frac{1}{4\tau}; -\frac{1}{2\tau}\right)\ll q_0^{\frac{1}{8}}.
\end{align*}
Combining this with Eqs. \eqref{thetahalfin0proof} and \eqref{hin0proof} gives
\begin{align}\label{mu2in0proof}
    \mu\left(\frac{1}{2},\frac{1}{2};2\tau\right)= \frac{1}{2i} + O\left(|\tau|^{-\frac{1}{2}}q_0^{\frac{3}{16}}\right).
\end{align}
Combining Eqs. \eqref{thethetaforTatzeq0} and \eqref{mu2in0proof}, and then multiplying by $-1$ gives the estimate for $T(1;q)$ as $\tau \to 0$,
\begin{align}\label{overallTestzequal0}
    T(1;q)\sim \frac{\sqrt{2}}{4}q_0^{-\frac{1}{16}}.
\end{align}
We now show that the estimates coming from $T_1(1;q)$ and $T_2(1;q)$ are negligible.
For $T_1(1;q)$, we can use Eq. \eqref{mu2in0proof} with $\tau = \frac{\tau}{2}$ to obtain
\begin{align*}
    T_1(1;q)\ll 1,
\end{align*}
which shows $T_1(1;q)$ is negligible when compared with $T(1;q)$. We now turn to $T_2(1;q)$, which requires formally taking the limit and using the famous formula
\begin{align*}
   \left[\frac{\partial\vartheta(z;\tau)}{\partial z}\right]_{z=0} = -2\pi \eta(\tau)^3.
\end{align*}
Doing so, we find
\begin{align*}
    \lim_{z\to 0}T_2(w;q) = iq^{\frac{11}{8}}\frac{\vartheta(4\tau;12\tau)^3}{\vartheta(2\tau;6\tau)^3}\frac{\eta(\tau)^3\vartheta(\tau;2\tau)}{\eta(4\tau)^3}.
\end{align*}
Using Lemma \ref{thetatestimateprelim}, we find
\begin{align*}
    T_2(1;q)\ll \frac{q_0^{\frac{1}{8}}}{\sqrt{|\tau|}}.
\end{align*}
We see that our main contribution comes from $T_1(1;q)$. We now apply Theorem \ref{Tauberian} to $\frac{1}{2}T(1;e^{-t})$ with $\lambda = \frac{\sqrt{2}}{8}$, $\alpha =0$, and $A = \frac{\pi^2}{4}$ along with the fact that $\overline{p}(n) \sim \frac{1}{8n}e^{\pi\sqrt{ n}}$. 
\end{proof}

\subsection{Odd balanced for fixed roots of unity}\label{zfixedrootsofunity}
\hspace{5mm} We now consider the growth of $V(w;q)$ where $w$ is a generic root of unity, not equal to $\pm i$ or $-1$. We break the study into four cases by splitting the interval $(0,1)$ into four pieces of length $\frac{1}{4}$.

\subsubsection{Case: $0<z<\frac{1}{4}$}
\hspace{5mm} With this restriction, we are able to prove the following. 
\begin{lemma}\label{lessthan14near0} Let $z<\frac{1}{4}$. Then as $n\to \infty$
\begin{align*}
V(w;q)\sim \frac{\sqrt{2}}{4}\frac{w^{-\frac{1}{2}}}{1+w^{-1}}q_0^{\frac{z^2}{2}-\frac{1}{16}}h(2z;2\tau).
\end{align*}
\end{lemma}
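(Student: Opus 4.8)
The plan is to compute the asymptotic behavior of $V(w;q)$ as $\tau\to 0$ directly from the decomposition in Eq.~\eqref{Vincomponents}, identify which of the three terms $T_1, T, T_2$ dominates when $0<z<\frac{1}{4}$, and then read off the stated expression. First I would isolate the dominant term. For the $z=0$ case we saw that $T(1;q)$ carried the main contribution, and since $z<\frac{1}{4}$ keeps us in a regime where the same building block $\mu(2z+\frac{1}{2},\frac{1}{2};2\tau)$ governs the growth, I expect $T(w;q)$ to again dominate. The key is to track the $q_0$-exponent produced by each theta and Appell factor: the claimed leading exponent $\frac{z^2}{2}-\frac{1}{16}$ should emerge from combining the Gaussian factor $q^{-\alpha^2/2}$ in the $\vartheta$-estimates of Lemma~\ref{thetatestimateprelim} (with $\alpha$ tied to $z$) against the $q_0^{1/16}$ from the denominator $\vartheta(\tau;2\tau)$, exactly as in Eq.~\eqref{thethetaforTatzeq0}.

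The core computation is the application of the Appell transformation, Eq.~\eqref{Appelltransnonnormal}, to $\mu(2z+\frac{1}{2},\frac{1}{2};2\tau) = A_1(2z+\frac{1}{2},\frac{1}{2};2\tau)/\vartheta(2z+\frac{1}{2};2\tau)$. After applying the transformation with $u=v=2z+\frac{1}{2}$ (so that $u-v=0$ in the theta argument) and $\tau\mapsto 2\tau$, the non-normalized Appell term $A_1$ of the $-\frac{1}{\tau}$-transformed argument should be negligible as $\tau\to 0$ — the unilateral-sum trick used at $z=0$ shows it is $O(q_0^{\text{positive}})$ — leaving the Mordell-integral piece $\frac{1}{2i}h(2z;2\tau)$ as the surviving main term. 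This is precisely where the $h(2z;2\tau)$ in the statement comes from, and Lemma~\ref{lowerboundmordell} guarantees it is positive and $O(1)$ for $0<z<\frac{1}{4}$ since then $0<2z<\frac{1}{2}$ lies in the admissible range $(-\frac{1}{2},\frac{1}{2})$. I would then assemble the prefactor: the ratio $\vartheta(\frac{1}{2}+z;\tau)/\vartheta(\tau;2\tau)$ contributes via Eqs.~\eqref{fulllatticethetadecay}--\eqref{mixmibabi1moretime}, supplying the $q_0^{z^2/2-1/16}$ growth and a trigonometric/exponential constant, while the explicit prefactors $-q^{-1/8}w^{-1/2}$ in $T(w;q)$ and the overall $-(1+w^{-1})^{-1}q^{-1}$ from solving Eq.~\eqref{Vincomponents} for $V(w;q)$ produce the $\frac{w^{-1/2}}{1+w^{-1}}$ factor and the numerical constant $\frac{\sqrt{2}}{4}$.

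Before concluding I would verify that $T_1(w;q)$ and $T_2(w;q)$ are genuinely subdominant in this range. For $T_1$, which involves $\mu(z+\frac{1}{2},\frac{1}{2};\tau)$, the same transformation argument (now at modulus $\tau$ rather than $2\tau$) should bound it by a lower power of $q_0^{-1}$, hence it is absorbed into the error; for $T_2$, the product of theta quotients and the factor $q^{11/8}$ should likewise yield a strictly smaller exponent, as it did at $z=0$. The main obstacle I anticipate is bookkeeping the $q_0$-exponents carefully enough to confirm that $\frac{z^2}{2}-\frac{1}{16}$ is strictly larger than the exponents from $T_1$ and $T_2$ throughout $0<z<\frac{1}{4}$ — in particular ensuring the comparison does not degenerate at the endpoints — and correctly matching the $\alpha$-shift conventions of Lemma~\ref{thetatestimateprelim} to the half-integer arguments $\frac{1}{2}+z$ and $2z+\frac{1}{2}$ appearing in $T$. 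A secondary subtlety is confirming that the argument $2z$ of the surviving Mordell integral indeed stays inside the interval where Lemma~\ref{lowerboundmordell} applies, which is what forces the $z<\frac{1}{4}$ restriction and explains why this case is treated separately from the others.
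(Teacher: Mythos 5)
Your proposal follows essentially the same route as the paper: isolate $T(w;q)$ as the dominant piece of the decomposition \eqref{Vincomponents}, extract the Mordell integral from the Appell sum via \eqref{Appelltransnonnormal}, estimate the theta quotient $\vartheta(\frac{1}{2}+z;\tau)/\vartheta(\tau;2\tau)$ with Lemma \ref{thetatestimateprelim} to produce $q_0^{\frac{z^2}{2}-\frac{1}{16}}$, and dispose of $T_1$ and $T_2$ by comparing $q_0$-exponents (the paper does exactly this with the polynomials $f_1,f_2,f_3$ on $(0,\frac{1}{4})$). Your closing concerns --- that the exponent comparison could degenerate at the endpoint $z=\frac{1}{4}$, and that $2z$ must stay in $(-\frac{1}{2},\frac{1}{2})$ for Lemma \ref{lowerboundmordell} --- are the right ones and are resolved in the paper just as you anticipate.

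There is, however, a concrete error in your core step. You write $\mu\left(2z+\frac{1}{2},\frac{1}{2};2\tau\right)=A_1\left(2z+\frac{1}{2},\frac{1}{2};2\tau\right)/\vartheta\left(2z+\frac{1}{2};2\tau\right)$ and apply \eqref{Appelltransnonnormal} ``with $u=v=2z+\frac{1}{2}$, so that $u-v=0$.'' Taken literally this yields the term $\frac{1}{2i}h(0;2\tau)\vartheta\left(2z+\frac{1}{2};2\tau\right)$, not the $h(2z;2\tau)$ that you (correctly) assert survives; moreover the normalizing theta of $\mu$ is the one attached to the \emph{second} argument, i.e.\ $\vartheta\left(\frac{1}{2};2\tau\right)$. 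The correct identification is $u=2z+\frac{1}{2}$, $v=\frac{1}{2}$, whence $u-v=2z$; dividing the transformation formula by $\vartheta\left(\frac{1}{2};2\tau\right)$ cancels the theta on the right-hand side and leaves exactly $\frac{1}{2i}h(2z;2\tau)$ plus the transformed Appell sum, which the unilateral-sum trick bounds by a positive power of $q_0$ for $z<\frac{1}{4}$. A smaller slip: solving \eqref{Vincomponents} gives $V=\frac{-T_1+T-wT_2}{(1+w^{-1})q}$, so the factor multiplying $T$ is $+\frac{1}{(1+w^{-1})q}$ rather than the minus sign you state; the positive constant $\frac{\sqrt{2}}{4}$ then comes from the leading $-$ in the definition of $T$ combined with the two factors of $\frac{1}{2i}$. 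With these corrections your argument coincides with the paper's proof.
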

\begin{proof}
We address the Appell sum first:
\begin{align}
\begin{split}\label{mupluginlessthan14}
    \mu\left(2z+\frac{1}{2},\frac{1}{2};2\tau\right) = \frac{1}{\vartheta\left(\frac{1}{2};2\tau\right)}A\left(2z+\frac{1}{2},\frac{1}{2};2\tau\right)
    &=\frac{h(2z;2\tau)}{2i}\\
    &+\frac{e^{\frac{\pi i}{2\tau}(4z^2+\frac{1}{4}+2z)}}{2i\vartheta\left(\frac{1}{2};2\tau\right)}\left( \sum_{n\in\mathbb{Z}}\frac{(-1)^nq_0^{\frac{n^2}{4}}}{1-q_0^{\frac{n}{2}-z-\frac{1}{4}}}\right).
\end{split}
\end{align}
It is not hard to prove that the Mordell integral is of $O(1)$ for all $\tau$. This leaves the sum in the parentheses to deal with. Splitting up the sum into positive and negative $n$, we find
\begin{align*}
    \sum_{n\in\mathbb{Z}}\frac{(-1)^nq_0^{\frac{n^2}{4}}}{1-q_0^{\frac{n}{2}-z-\frac{1}{4}}}= -q_0^{\frac{1}{4}}+O\left(q_0^{\frac{1}{4}+z}\right).
\end{align*}
Plugging this back into Eq. \eqref{mupluginlessthan14}, we find 
\begin{align}
\begin{split}\label{mu2z2t}
     \mu\left(2z+\frac{1}{2},\frac{1}{2};2\tau\right)&= \frac{h(2z;2\tau)}{2i}-\frac{q_0^{\frac{3}{16}-z^2-\frac{z}{2}}}{2i\vartheta\left(\frac{1}{2};2\tau\right)}\left(1+O\left(q_0^z\right)\right)\\
     &= \frac{h(2z;2\tau)}{2i} + \frac{\sqrt{2}}{2i}\sqrt{-i\tau}q_0^{\frac{3}{16}-z^2-\frac{z}{2}}\left(1+O(q_0^{z})\right).
\end{split}
\end{align}
Subbing in $z = \frac{z}{2},\tau = \frac{\tau}{2}$, we also find
\begin{align}\label{T1splitlessthan14}
     T_1(w;q) = -iw^{-\frac{1}{2}}\mu\left(z+\frac{1}{2},\frac{1}{2};\tau\right) = -w^{-\frac{1}{2}}\frac{h(z;\tau)}{2} + \frac{w^{-\frac{1}{2}}}{2}\sqrt{-i\tau}q_0^{\frac{3}{32}-\frac{z^2}{8}-\frac{z}{8}}\left(1+O\left(q_0^{z}\right)\right).
\end{align}
With regard to the $\vartheta$-quotients, we can directly apply Lemma \ref{thetatestimateprelim}. We find that for $z<\frac{1}{2}$,
\begin{align}\label{thetaquotientforT}
    \frac{\vartheta(\frac{1}{2}+z;\tau)}{\vartheta(\tau;2\tau)} = \frac{-\frac{q_0^{\frac{z^2}{2}}}{\sqrt{-i\tau}}\left(1+O\left(q_0^{z+\frac{1}{2}}\right)\right)}{-2i\frac{q_0^{\frac{1}{16}}}{\sqrt{-2i\tau}}\left(1+O\left(q_0^{\frac{1}{2}}\right)\right)} = \frac{\sqrt{2}}{2i}q_0^{\frac{z^2}{2}-\frac{1}{16}}\left(1+O\left(q_0^{\frac{1}{2}}\right)\right),
\end{align}
For $z<\frac{1}{2}$, we have
\begin{align}
\begin{split}\label{partialthetaqoutientforfarright}
    \frac{\vartheta(4\tau;12\tau)^3\vartheta(z;\tau)\vartheta(2z+\tau;2\tau)}{\vartheta(2\tau;6\tau)^3}&= \left(\frac{q^{-\frac{1}{12\cdot 8}}}{\sqrt{2}}\left(1+O\left(q_0^{\frac{1}{12}}\right)\right)\right)^3\frac{iw^{-1}}{\sqrt{2}(-i\tau)}\\
    &\;\;\;\;\times q_0^{\frac{3}{2}z^2-z+\frac{3}{16}}\left(1+O(q_0^z)\right)\\
    &=\frac{iw^{-1}}{-4i\tau}q_0^{\frac{5}{32}+\frac{3}{2}z^2-z}\left(1+O\left(q_0^{\min\left(z,\frac{1}{12}\right)}\right)\right).
\end{split}
\end{align}
Finally for $z<\frac{1}{4}$, we have
\begin{align}\label{theta4z4t}
    \vartheta(4z;4\tau) = -\frac{q_0^{\frac{16z^2}{8}-\frac{4z}{8}+\frac{1}{32}}}{2\sqrt{-i\tau}}\left(1+O\left(q_0^{z}\right)\right).
\end{align}
Combining Eqs \eqref{mu2z2t} and \eqref{thetaquotientforT} gives
\begin{align}\label{Tinlessthan14}
    T(w,q) = \frac{\sqrt{2}}{4}w^{-\frac{1}{2}}q_0^{\frac{z^2}{2}-\frac{1}{16}}h_2\cdot\left(1+O\left(q_0^{\frac{1}{2}}\right)\right)+\frac{w^{-\frac{1}{2}}}{2}\sqrt{-i\tau}q_0^{\frac{1}{8}-\frac{z^2}{2}-\frac{z}{2}}\left(1+O(q_0^z)\right),
\end{align}
where $h_2:= h(2z;2\tau)$. Combining Eqs. \eqref{partialthetaqoutientforfarright} and \eqref{theta4z4t} gives,
\begin{align}\label{farrightlessthan14}
    T_2(w;q)  = \frac{w^{-\frac{3}{2}}}{2\sqrt{-i\tau}}q_0^{\frac{1}{8}-\frac{z^2}{2}-\frac{z}{2}}\left(1+O\left(q_0^{\min\left(z,\frac{1}{12}\right)}\right)\right).
\end{align}
We now study the following polynomials on the interval $\left(0,\frac{1}{4}\right)$, which correspond to the exponents of $q_0$ in Eqs. \eqref{T1splitlessthan14}, \eqref{Tinlessthan14}, and \eqref{farrightlessthan14}:
\begin{align*}
    f_1(z)&:= \frac{3}{32}-\frac{z^2}{8}-\frac{z}{8},\;\;f_2(z):=\frac{z^2}{2}-\frac{1}{16},\;\;\textnormal{and}\;\; f_3(z):= \frac{1}{8}-\frac{z^2}{2}-\frac{z}{2}=:f_4(z).
\end{align*}
It is not hard to see that $f_2(z)$ gives the smallest values on the interval $0<z<\frac{1}{4}$, and thus tells us that $T$ gives the primary estimate and $T_1$ and $T_2$ are error terms.\end{proof}

\subsubsection{Case: $\frac{1}{4}<z<\frac{1}{2}$}
\hspace{5mm} We see in this interval, that as before, $T$ will provide the main term. More precisely, we prove the following.

\begin{lemma}\label{growthbetween14and12}
Let $\frac{1}{4}<z<\frac{1}{2}$. Then as $n\to \infty$
\begin{align*}
    V(w;q) \sim -\frac{w^{-\frac{1}{2}}}{1+w^{-1}}\frac{\sqrt{2}}{4}h\left(\frac{3}{4}-z;2\tau\right)q_0^{\frac{z^2}{2}-\frac{1}{16}}.
\end{align*}
\end{lemma}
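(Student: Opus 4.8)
The plan is to run the same machine as in the proof of Lemma~\ref{lessthan14near0}: feed the decomposition \eqref{Vincomponents} and argue that $T(w;q)$ supplies the main term while $T_1(w;q)$ and $T_2(w;q)$ are of lower order. A first, reassuring observation is that the $\vartheta$-quotient estimate \eqref{thetaquotientforT} was proved for the whole range $z<\tfrac12$, so it is available verbatim and still contributes the factor $q_0^{z^2/2-1/16}$; this is why the power of $q_0$ in the claim agrees with that of the previous case. Everything new is forced by the fact that on $\left(\tfrac14,\tfrac12\right)$ the arguments of several objects leave the fundamental ranges underlying Lemmas~\ref{thetatestimateprelim} and~\ref{lowerboundmordell}, so the estimates \eqref{mu2z2t}, \eqref{theta4z4t} and \eqref{farrightlessthan14} must be recomputed.

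First I would re-examine $\mu\!\left(2z+\tfrac12,\tfrac12;2\tau\right)$. Transformation \eqref{Appelltransnonnormal} again produces the Mordell integral $h(2z;2\tau)$, but now $2z\in\left(\tfrac12,1\right)$ lies outside $\left(-\tfrac12,\tfrac12\right)$, so Lemma~\ref{lowerboundmordell} no longer gives $h(2z;2\tau)\ll1$ --- indeed the limiting integral diverges. This is the analytic fingerprint of a wall-crossing: the pole of the summand $1/\!\left(1-q_0^{n/2-z-1/4}\right)$ in \eqref{mupluginlessthan14} sits at $n=2z+\tfrac12$, which crosses the lattice point $n=1$ precisely at $z=\tfrac14$, so for $\tfrac14<z<\tfrac12$ the dominant term of the unilateral sum is $n=1$ rather than $n=0$. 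Reorganising around this new nearest lattice point --- splitting off its unbounded contribution by means of the Mordell three-term relation $h(x;\sigma)+h(x+1;\sigma)=\tfrac{2}{\sqrt{-i\sigma}}e^{\pi i(x+1/2)^2/\sigma}$ (obtained by adding the two integrands and evaluating the Gaussian) with $\sigma=2\tau$ --- is what returns the bounded Mordell integral to the admissible range and leaves the one recorded in the statement, $h\!\left(\tfrac34-z;2\tau\right)$. Its argument $\tfrac34-z\in\left(\tfrac14,\tfrac12\right)$ again lies where Lemma~\ref{lowerboundmordell} gives $0<h\!\left(\tfrac34-z;2\tau\right)\ll1$, and since both $\tfrac34-z$ and the argument $2z$ of the previous case equal $\tfrac12$ at $z=\tfrac14$ --- exactly where the Mordell integral degenerates --- the two cases meet continuously at the wall.

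The subtle part is the bookkeeping of the terms created by this reorganisation. The remainder coming from the three-term relation, once multiplied by \eqref{thetaquotientforT}, leaves in $T(w;q)$ a contribution of order $q_0^{-z^2/2+z/2-1/8}$; since $-z^2/2+z/2-1/8=(z^2/2-1/16)-(z-\tfrac14)^2$, this exponent is strictly smaller than $z^2/2-1/16$ throughout $\left(\tfrac14,\tfrac12\right)$, so a priori it would dominate. The resolution must come from $T_2(w;q)$: because $4z\in(1,2)$, I would first invoke $\vartheta(4z;4\tau)=-\vartheta(4z-1;4\tau)$ from Proposition~\ref{transformprop}, which alters the estimate \eqref{theta4z4t} and hence \eqref{farrightlessthan14}, giving $-wT_2(w;q)$ a term of exactly the same order $q_0^{-z^2/2+z/2-1/8}$. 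The whole case therefore hinges on showing that these two potentially dominant pieces combine so as to leave only the bounded survivor $\tfrac{\sqrt2}{4}q_0^{z^2/2-1/16}h\!\left(\tfrac34-z;2\tau\right)$; this is precisely the mixed mock modular mechanism, the unbounded growth of the Appell (mock) part being reconciled by the weakly holomorphic quotient $T_2$.

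I expect this reconciliation to be the main obstacle. It requires matching not only the $q_0$-exponents but also the powers of $\sqrt{-i\tau}$ and every constant --- the stray factors of $i$, of $\sqrt2$, the sign introduced by $\vartheta(4z;4\tau)=-\vartheta(4z-1;4\tau)$, and the even symmetry $h(-x;\tau)=h(x;\tau)$ --- so that the two contributions of order $q_0^{-z^2/2+z/2-1/8}$ resolve correctly rather than merely at leading order. Once this is settled the argument closes as before: $T_1(w;q)$ is $O(1)$ (its Mordell argument $z\in\left(\tfrac14,\tfrac12\right)$ never leaves the admissible range) and so does not affect the exponential order that governs the coefficient growth, a final comparison of exponents confirms $q_0^{z^2/2-1/16}$ as the leading power, and assembling it with the prefactor $\tfrac{w^{-1/2}}{1+w^{-1}}$ from \eqref{Vincomponents} yields the stated asymptotic.
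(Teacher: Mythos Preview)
Your route differs from the paper's at one decisive step, and that difference is what creates the cancellation problem you flag as ``the main obstacle.'' The paper does \emph{not} apply \eqref{Appelltransnonnormal} directly to $\mu(2z+\tfrac12,\tfrac12;2\tau)$ and then repair the out-of-range Mordell integral via the three-term relation. Instead it first uses the elliptic shift $\mu(u+1,v;\tau)=-\mu(u,v;\tau)$: setting $r:=z-\tfrac14\in(0,\tfrac14)$ one has $\mu(2z+\tfrac12,\tfrac12;2\tau)=-\mu(2r,\tfrac12;2\tau)$, and only \emph{then} is the modular transformation applied. The Mordell integral that emerges already has its argument in $(-\tfrac12,0)$, so Lemma~\ref{lowerboundmordell} applies at once, and the transformed Appell tail is $O\bigl(|\tau|^{-1/2}q_0^{\,r-r^2}\bigr)$ with $r-r^2>0$ --- a genuine error term. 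The three-term relation never enters, and $T$, $T_1$, $T_2$ are estimated separately and then compared by their $q_0$-exponents exactly as in Lemma~\ref{lessthan14near0}.

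This also shows that your diagnosis of where the cancellation lives is off. Both routes compute the very same function $T(w;q)$; since the paper's route exhibits $T$ with \emph{no} piece of order $q_0^{-z^2/2+z/2-1/8}$, the two large contributions you generate --- the Gaussian from the three-term relation and the altered Appell remainder (for $z>\tfrac14$ the $n=1$ summand in \eqref{mupluginlessthan14} changes character, so \eqref{mu2z2t} is no longer valid as written) --- must already cancel \emph{inside} $T$, before $T_2$ is ever touched. Building the argument on a $T$-versus-$T_2$ cancellation would therefore have you chasing a match of constants that is not actually there. Your approach can be salvaged by recomputing the Appell remainder for $z>\tfrac14$ and verifying that it absorbs the Gaussian, but the shift-first shortcut makes all of this unnecessary.
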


\begin{proof}
We only need to modify the steps taken to obtain Eqs. \eqref{mu2z2t} and \eqref{theta4z4t}. Let $r:= z-\frac{1}{4}$. Starting with the Appell sum,
we have
\begin{align*}
    \mu\left(2z+\frac{1}{2},\frac{1}{2};2\tau\right)&= \mu\left(2\left(\frac{1}{4}+r\right)+\frac{1}{2},\frac{1}{2};2\tau\right) = - \mu\left(2r,\frac{1}{2};2\tau\right)\\
    &= - \frac{A\left(2r,\frac{1}{2};2\tau\right)}{\vartheta\left(\frac{1}{2};2\tau\right)} = \frac{h\left(r-\frac{1}{2};2\tau\right)}{2i}+\frac{e^{\frac{\pi i}{2\tau}(4r^2-2r)}}{2\tau\vartheta\left(\frac{1}{2};2\tau\right)}A\left(\frac{r}{\tau},\frac{1}{4\tau};-\frac{1}{2\tau}\right)\\
    &=-\frac{h\left(r-\frac{1}{2};2\tau\right)}{2i}-\frac{e^{\frac{2\pi i}{\tau}r^2}}{2\tau\vartheta\left(\frac{1}{2};2\tau\right)}\sum_{n\in \mathbb{Z}}\frac{(-1)^nq_0^{\frac{n^2}{4}}}{1-q_0^{-r+\frac{n}{2}}}.
\end{align*}
We now split the sum as before into positive and negative $n$ and expand the denominators in terms of geometric series, noting that $\frac{1}{1-q_0^{-a}} = -q_0^a\left(1+O(q_0^a)\right)$ for $a>0$. Doing so, we find
\begin{align*}
    \mu\left(2z+\frac{1}{2},\frac{1}{2};2\tau\right) &= -\frac{h\left(r-\frac{1}{2};2\tau\right)}{2i}-\frac{e^{2\frac{\pi i}{\tau}r^2}}{2\tau\vartheta\left(\frac{1}{2};2\tau\right)}\left(-q_0^{r}+O\left(q_0^{2r}\right)\right)\\
    &= -\frac{h\left(r-\frac{1}{2};2\tau\right)}{2i}+\frac{q_0^{r-r^2}}{2\tau\vartheta\left(\frac{1}{2};2\tau\right)}\left(1+O\left(q_0^{r}\right)\right).
\end{align*}
Lemma \ref{thetatestimateprelim} tells us that $\vartheta\left(\frac{1}{2};2\tau\right)\ll \frac{1}{\sqrt{|\tau|}}$. Combining this with the fact that $r-r^2>0$ since $0<r<\frac{1}{4}$, we have
\begin{align*}
    \frac{q_0^{r-r^2}}{2\tau\vartheta\left(\frac{1}{2};2\tau\right)}\left(1+O(q_0^{r})\right) \ll O(q_0^{\varepsilon})
\end{align*}
for some $\varepsilon>0$. It is not hard to see  that $0\neq h\left(r-\frac{1}{2};2\tau\right)\ll 1$. Therefore,
\begin{align}\label{mu2z2tbetween14and12}
    \mu\left(2z+\frac{1}{2},\frac{1}{2};2\tau\right) =  -\frac{h\left(r-\frac{1}{2};2\tau\right)}{2i}\left(1+ O\left(\frac{q_0^{r-r^2}}{\sqrt{|\tau|}}\right)\right).
\end{align}
Combining Eq. \eqref{mu2z2tbetween14and12} with Eq. \eqref{thetaquotientforT} gives
\begin{align}\label{Tbetween14and12}
    T(w;q)= -w^{-\frac{1}{2}}\frac{\sqrt{2}}{4}h\left(\frac{1}{2}-r;2\tau\right)q_0^{\frac{z^2}{2}-\frac{1}{16}}\left(1+O\left(\frac{q_0^{r-r^2}}{\sqrt{|\tau|}}\right)+O\left(q_0^{\frac{1}{2}}\right)\right).
\end{align}
Now we turn to the analog of Eq. \eqref{theta4z4t}. We have
\begin{align}\label{theta4z4tbetween14and12}
    \vartheta\left(4z;4\tau\right) &= \vartheta\left(1+4r;4\tau\right) = -\vartheta\left(4r;4\tau\right)
    =\frac{q_0^{2r^2-\frac{r}{2}+\frac{1}{32}}}{2\sqrt{-i\tau}}\left(1+O\left(q_0^{r}\right)\right).
\end{align}
Combining Eqs. \eqref{partialthetaqoutientforfarright} and \eqref{theta4z4tbetween14and12} and making the substitution $r = z-\frac{1}{4}$ gives
\begin{align*}
    T_2(w;q) =- \frac{w^{-\frac{1}{2}}}{2\sqrt{-i\tau}}q_0^{-\frac{1}{8}-\frac{z^2}{2}+\frac{z}{2}}\left(1+O\left(q_0^{\min\left(z-\frac{1}{4},\frac{1}{12}\right)}\right)\right).
\end{align*}
 We now recycle the same estimate from Eq. \eqref{T1splitlessthan14}, and  we consider again the polynomials 
\begin{align*}
    f_1(z):= \frac{3}{32}-\frac{z^2}{8}-\frac{z}{8},\;\; f_2(z):= \frac{z^2}{2}-\frac{1}{16},\;\; \textnormal{and}\;\;f_3(z) := \frac{1}{8}-\frac{z^2}{2}+\frac{z}{2}.
\end{align*}
We see that on for $\frac{1}{4}<z<\frac{1}{2}$ that $f_2(z)$ is the smallest. Swapping $r$ for $z-\frac{1}{4}$ and dividing Eq. \eqref{Tbetween14and12} by $1+w^{-1}$ proves the claim. \end{proof}

\subsubsection{Case: $\frac{1}{2}<z<\frac{3}{4}$}
\hspace{5mm} Since the Appell sum $\mu\left(2z+\tau,\frac{1}{2};2\tau\right)$ is anti-periodic in the shift $z\to z+\frac{1}{2}$, we can recycle many of the estimates from the previous case to find the following.
\begin{lemma}\label{between1234}
Let $\frac{1}{2}<z<\frac{3}{4}$. As $n\to \infty$
\begin{align*}
   V(w;q)\sim -\frac{w^{-\frac{1}{2}}}{1+w^{-1}}\frac{1}{\sqrt{-i\tau}}q_0^{-\frac{z^2}{2}+\frac{z}{2}-\frac{1}{8}}.
\end{align*}
\end{lemma}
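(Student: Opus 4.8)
The plan is to feed on the decomposition \eqref{Vincomponents}, $(1+w^{-1})qV(w;q) = -T_1(w;q)+T(w;q)-wT_2(w;q)$, and to show that on this interval it is the \emph{last} term $-wT_2(w;q)$ — rather than $T$, as in the two preceding cases — that furnishes the dominant contribution. This is exactly why the claimed asymptotic carries the algebraic factor $\frac{1}{\sqrt{-i\tau}}$ and no Mordell integral. First I would dispose of the two Appell-sum terms $T$ and $T_1$. Writing $z=\frac12+s$ with $0<s<\frac14$ and using the anti-periodicity $\mu(u+1,v;\tau)=-\mu(u,v;\tau)$, one gets $\mu(2z+\frac12,\frac12;2\tau)=-\mu(2s+\frac12,\frac12;2\tau)$, so the Appell estimate \eqref{mu2z2t} from the case $0<z<\frac14$ is recycled verbatim up to sign, and similarly for the Appell sum inside $T_1$. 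The only genuinely new input for $T$ is the theta quotient $\frac{\vartheta(\frac12+z;\tau)}{\vartheta(\tau;2\tau)}$: since now $\frac12+z\in(1,\frac54)$, I would first apply $\vartheta(\frac12+z;\tau)=-\vartheta(z-\frac12;\tau)$ (Proposition \ref{transformprop}, item 1) and then Lemma \ref{thetatestimateprelim}, which shifts the governing $q_0$-exponent from $\frac{z^2}{2}-\frac{1}{16}$ to $\frac{z^2}{2}-z+\frac{7}{16}$; the leading part of $T_1$ stays $O(1)$.

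Next, for $T_2(w;q)$ I would reduce every theta argument into a fundamental strip before invoking Lemma \ref{thetatestimateprelim}. Since $2z\in(1,\frac32)$ and $4z\in(2,3)$, Proposition \ref{transformprop}(1) gives $\vartheta(2z+\tau;2\tau)=-\vartheta(2z-1+\tau;2\tau)$ and $\vartheta(4z;4\tau)=\vartheta(4z-2;4\tau)$, after which the decay formulae \eqref{fulllatticethetadecay} and \eqref{mixmibabi1moretime} apply directly; the $z$-independent ratio $\frac{\vartheta(4\tau;12\tau)^3}{\vartheta(2\tau;6\tau)^3}$ is recycled from \eqref{partialthetaqoutientforfarright}. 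Assembling the four contributions, the $q_0$-exponent of $T_2$ collapses to $-\frac{z^2}{2}+\frac{z}{2}-\frac18$, and the $\sqrt{-i\tau}$ powers combine to a single inverse power: two inverse half-powers from the numerator thetas and one half-power from the denominator theta give $\frac{1}{\sqrt{-i\tau}}$, while the $z$-independent ratio contributes no net power.

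With all three estimates recorded as functions of $z$ on $(\frac12,\frac34)$, the proof reduces to an exponent comparison. I would show that $f(z):=-\frac{z^2}{2}+\frac{z}{2}-\frac18$ is strictly the most negative: the relevant gaps are nonnegative on the interval, the cleanest being that the difference between the $T$-exponent $\frac{z^2}{2}-z+\frac{7}{16}$ and $f(z)$ equals $\left(z-\frac34\right)^2\ge 0$, while the gap with the secondary part of $T$ reduces to $\frac34-z\ge 0$ and $T_1$ contributes exponent $0>f(z)$. Hence $-wT_2(w;q)$ dominates; dividing by $(1+w^{-1})q$ and letting $q\to 1$ then yields $V(w;q)\sim -\frac{w^{-\frac12}}{1+w^{-1}}\frac{1}{\sqrt{-i\tau}}q_0^{-\frac{z^2}{2}+\frac{z}{2}-\frac18}$.

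The hard part will be the constant- and phase-tracking in the $T_2$ computation: carrying the signs from the periodicity shifts $\vartheta(\cdot+1)=-\vartheta(\cdot)$, the phase factors $e^{\pi i\alpha(1-\frac{2}{k})}$ from \eqref{mixmibabi1moretime}, and the several $\sqrt{-i\tau}$ normalizations through four theta functions so that the leading coefficient collapses \emph{exactly} to $-\frac{w^{-1/2}}{1+w^{-1}}\frac{1}{\sqrt{-i\tau}}$ (note that the governing power of $w$ and the sign differ from the $\frac14<z<\frac12$ case precisely because of these shifts). A secondary subtlety is the endpoint behaviour: as $z\to\frac12^+$ the exponent $f(z)\to 0$ and the exponential growth degenerates, consistent with $w=-1$ being excluded, while as $z\to\frac34^-$ the $T$- and $T_2$-exponents coincide, so strict domination of $T_2$ is guaranteed only on the open interval, with $z=\frac34$ (that is, $w=-i$) left to the boundary values treated elsewhere.
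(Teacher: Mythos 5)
Your proposal follows essentially the same route as the paper's proof: substitute $z=\tfrac12+r$, reduce all theta and Appell arguments by (anti-)periodicity, apply Lemma \ref{thetatestimateprelim}, and conclude by comparing exponents that $T_2$ (not $T$) dominates on this interval, with the same governing polynomials $\frac{z^2}{2}-z+\frac{7}{16}$ for $T$ and $-\frac{z^2}{2}+\frac{z}{2}-\frac18$ for $T_2$ and the same key identity that their difference is $\left(z-\frac34\right)^2$. Your flagged concern about constant-tracking in $T_2$ is well placed, but the argument is correct and matches the paper.
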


\begin{proof}
We let $z:= \frac{1}{2}+r$. Using identical arguments as in Lemmas \ref{lessthan14near0} and \ref{growthbetween14and12}, we have
\begin{align*}
    T_1(w;q) &= -iq^{\frac{1}{8}}w^{-\frac{1}{2}}\mu\left(r+1,\frac{1}{2};\tau\right) =iq^{\frac{1}{8}}w^{-\frac{1}{2}}\mu\left(r,\frac{1}{2};\tau\right) \\
    &= iq^{\frac{1}{8}}w^{-\frac{1}{2}}\left(\frac{h(r,\frac{1}{2})}{2i}+\frac{e^{\frac{\pi i}{\tau}(r^2-r)}}{\tau\vartheta\left(\frac{1}{2};\tau\right)}A\left(\frac{r}{\tau},\frac{1}{2\tau};-\frac{1}{\tau}\right)\right)\\
    &= \frac{1}{2}w^{-\frac{1}{2}}h\left(r;\frac{1}{2}\right)\left(1+O\left(\frac{q_0^{r-\frac{r^2}{2}}}{\sqrt{|\tau|}}\right)\right).
\end{align*}

We now look at $T$:
\begin{align*}
    T(w,q) &= -w^{-\frac{1}{2}}\frac{\vartheta(1+r;\tau)}{\vartheta(\tau;2\tau)}\mu\left(2r+1+\frac{1}{2},\frac{1}{2};2\tau\right) =  -w^{-\frac{1}{2}}\frac{\vartheta(r;\tau)}{\vartheta(\tau;2\tau)}\mu\left(2r+\frac{1}{2},\frac{1}{2};2\tau\right)\\
    &=-w^{-\frac{1}{2}}\frac{\vartheta(r;\tau)}{\vartheta(\tau;2\tau)}\frac{h(2r;2\tau)}{2i}\left(1+O\left(\sqrt{|\tau|}q_0^{\frac{3}{16}-r^2-\frac{r}{2}}\right)\right)\\
    &=-w^{-\frac{1}{2}}\frac{\sqrt{2}}{4}q_0^{\frac{r^2}{2}-\frac{r}{2}+\frac{1}{16}}h(2r;2\tau)\left(1+O\left(q_0^{\min\left(r,\frac{3}{16}-r^2-\frac{r}{2}\right)}\right)\right),
\end{align*}
where we used Lemma \ref{thetatestimateprelim} in the last step. We finally need $T_2$:
\begin{align*}
    T_2(w;q) &= iw^{-\frac{1}{2}}\frac{\vartheta(4\tau;12\tau)^3}{\vartheta(2\tau;6\tau)^3}\frac{\vartheta\left(r+\frac{1}{2};\tau\right)\vartheta\left(2r+1+\tau;2\tau\right)}{\vartheta(4r+2;4\tau)}\\
    &=-iw^{-\frac{1}{2}}\frac{\vartheta(4\tau;12\tau)^3}{\vartheta(2\tau;6\tau)^3}\frac{\vartheta\left(r+\frac{1}{2};\tau\right)\vartheta\left(2r+\tau;2\tau\right)}{\vartheta(4r;4\tau)}.
\end{align*}
We can recycle all of the estimates for these functions from Lemma \ref{lessthan14near0}, with the exception of the function $\vartheta\left(r+\frac{1}{2};\tau\right)$, which we can obtain from Lemma \ref{thetatestimateprelim}. This gives
\begin{align*}
    T_2(w;q) &= -iw^{-\frac{1}{2}}\left(\frac{q_0^{-\frac{1}{12\cdot 8}}}{\sqrt{2}}\left(1+O\left(q_0^{\frac{1}{12}}\right)\right)\right)^3\left(-\frac{q_0^{\frac{r^2}{2}}}{\sqrt{-i\tau}}\left(1+O\left(q_0^{r+\frac{1}{2}}\right)\right)\right)\\
    &\;\;\;\; \times \left(-2\sqrt{-i\tau}q_0^{-2r^2+\frac{r}{2}-\frac{1}{32}}\left(1+O\left(q_0^r\right)\right)\right)\vartheta(2r+\tau; 2\tau)\\
    &=-\frac{w^{-\frac{1}{2}}e^{-2\pi i r}}{2\sqrt{-i\tau}}q_0^{-\frac{r^2}{2}}\left(1+O\left(q_0^{\min\left(\frac{1}{12},r\right)}\right)\right).
\end{align*}
We now compare the polynomials for $0<r<\frac{1}{4}$,
\begin{align*}
    f_1(r):= \frac{r^2}{2}-\frac{r}{2}+\frac{1}{16}\;\; \textnormal{and}\;\;f_2(r):= -\frac{r^2}{2}.
\end{align*}
The values $f_2(r)$ are clearly smaller on this interval, which shows that our main estimate comes from $T_2$, and the contributions from $T_1$ and $T$ are error terms. Changing variables back to $z$ yields the result. \end{proof}

\subsubsection{Case: $\frac{3}{4}<z<1$.}
\hspace{5mm} As in the previous case, we can recycle some estimates to prove the following.
\begin{lemma}\label{between34and1}
Let $\frac{3}{4}<z<1$. Then as $n\to \infty$,
\begin{align*}
    V(w;q) \sim -\frac{w^{-\frac{1}{2}}}{1+w^{-1}}h\left(z-\frac{3}{4};2\tau\right)\frac{\sqrt{2}}{4}q_0^{\frac{z^2}{2}-z+\frac{7}{16}}.
\end{align*}
\end{lemma}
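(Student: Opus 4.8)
The plan is to reuse the template of Lemmas \ref{lessthan14near0}, \ref{growthbetween14and12}, and \ref{between1234}: decompose $(1+w^{-1})qV(w;q) = -T_1(w;q)+T(w;q)-wT_2(w;q)$, estimate each piece as $\tau\to 0$ in the cone $\textnormal{Im}(\tau)\leq M\,\textnormal{Re}(\tau)$, attach to each its power of $q_0$, and finish by comparing those powers as quadratics in $z$ on $\left(\frac{3}{4},1\right)$. The appearance of a Mordell integral in the claimed asymptotic tells me that $T$ should again supply the main term (as in the two leftmost subintervals), with $T_1$ and $T_2$ subdominant.

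First I would set $z=\frac{3}{4}+r$ with $0<r<\frac{1}{4}$, so that $z-\frac{3}{4}=r$ is the natural small parameter. For the Appell factor in $T$ I use the quasi-periodicity $\mu(u+2,v;\tau)=\mu(u,v;\tau)$ to write $\mu\left(2z+\frac{1}{2},\frac{1}{2};2\tau\right)=\mu\left(2r,\frac{1}{2};2\tau\right)$, which is, up to sign, exactly the object already treated in Lemma \ref{growthbetween14and12}. Feeding it through the transformation \eqref{Appelltransnonnormal}, together with $\vartheta\left(\frac{1}{2};2\tau\right)\ll|\tau|^{-\frac{1}{2}}$ from Lemma \ref{thetatestimateprelim} and $0\neq h\ll 1$ from Lemma \ref{lowerboundmordell}, recycles Eq. \eqref{mu2z2tbetween14and12} line for line and leaves an $O(1)$ leading term $\frac{1}{2i}h(\,\cdot\,;2\tau)$ whose argument, once reduced into the interval $\left(-\frac{1}{2},\frac{1}{2}\right)$ where Lemma \ref{lowerboundmordell} applies, equals $z-\frac{3}{4}$.

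Since the Appell factor is then $O(1)$, the power of $q_0$ in $T$ is carried entirely by the quotient $\vartheta\left(\frac{1}{2}+z;\tau\right)/\vartheta(\tau;2\tau)$. Because $\frac{1}{2}+z\in\left(\frac{5}{4},\frac{3}{2}\right)$ in this range, I first reduce $\vartheta\left(\frac{1}{2}+z;\tau\right)=-\vartheta\left(z-\frac{1}{2};\tau\right)$ with $z-\frac{1}{2}\in\left(\frac{1}{4},\frac{1}{2}\right)$, then apply Eq. \eqref{mixmibabi1moretime} (with $\alpha=0$ and $\frac{1}{k}=z-\frac{1}{2}$) to the numerator and Eq. \eqref{fulllatticethetadecay} to the denominator; a short computation gives the exponent $\left(\frac{z^2}{2}-z+\frac{1}{2}\right)-\frac{1}{16}=\frac{z^2}{2}-z+\frac{7}{16}$, which is precisely the power of $q_0$ in the claim. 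For $T_1$ and $T_2$ I would recycle the estimates \eqref{T1splitlessthan14} and \eqref{partialthetaqoutientforfarright}, recomputing only the theta factors whose arguments have crossed a lattice point on $\left(\frac{3}{4},1\right)$ — most importantly $\vartheta(4z;4\tau)$ with $4z\in(3,4)$, reduced via Proposition \ref{transformprop} as in Eq. \eqref{theta4z4t} — and then read off their $q_0$-exponents as explicit quadratics $f_1(z)$ and $f_3(z)$.

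The decisive step is the comparison of these three exponents: I must show that $f_2(z)=\frac{z^2}{2}-z+\frac{7}{16}$, coming from $T$, is strictly smaller than $f_1(z)$ and $f_3(z)$ throughout the open interval $\left(\frac{3}{4},1\right)$, so that $T$ alone survives and the other contributions collapse into the stated error factors. I expect this to be the main obstacle, because the leading term has just switched back from $T_2$ (in the previous subinterval) to $T$: the leading exponent is continuous across $z=\frac{3}{4}$, taking the common value $-\frac{1}{32}$ there, so near the left endpoint the inequalities $f_2<f_1,f_3$ are tight and must be verified with care. Combined with the check, via Lemma \ref{lowerboundmordell}, that $h\left(z-\frac{3}{4};2\tau\right)$ is a genuinely nonzero $O(1)$ factor, dividing the resulting estimate for $T$ by $1+w^{-1}$ and reverting to the variable $z$ then yields the claim.
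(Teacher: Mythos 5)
Your proposal follows the paper's proof essentially step for step: the same decomposition into $T_1$, $T$, $T_2$, the same reduction of the Appell factor in $T$ to the object already handled in Eq. \eqref{mu2z2tbetween14and12} via the shift $z=\tfrac{3}{4}+r$, the same use of Lemma \ref{thetatestimateprelim} for the $\vartheta$-quotient (your exponent $\tfrac{z^2}{2}-z+\tfrac{7}{16}$ matches), and the same final comparison of the $q_0$-exponents of the three pieces on $\left(\tfrac{3}{4},1\right)$, including the correct observation that the leading exponent is continuous at $z=\tfrac{3}{4}$ with value $-\tfrac{1}{32}$. This is the paper's argument, so the plan is sound as written.
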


\begin{proof}
Let $r_0:= 1-z$. We look first at $T_1$ with this variable change:
\begin{align*}
    T_1(w;q) &= -iw^{-\frac{1}{2}}\mu\left(1-r_0+\frac{1}{2},\frac{1}{2};\tau\right)= iw^{-\frac{1}{2}}\mu\left(-r_0+\frac{1}{2},\frac{1}{2};\tau\right)= iw^{-\frac{1}{2}}\frac{A\left(-r_0+\frac{1}{2},\frac{1}{2};\tau\right)}{\vartheta\left(\frac{1}{2};\tau\right)}\\
    &= \frac{h(-r_0;\tau)}{2i}+\frac{e^{\frac{\pi i}{\tau}(r_0^2+\frac{1}{4}-r_0)}}{2i\vartheta\left(\frac{1}{2};\tau\right)}\left( \sum_{n\in\mathbb{Z}}\frac{(-1)^nq_0^{\frac{n^2}{2}}}{1-q_0^{n+\frac{r_0}{2}-\frac{1}{4}}}\right)\\
    &=\frac{h(-r_0;\tau)}{2i} +O\left(q_0^{\frac{1}{8}-\frac{r_0^2}{2}}\right),
\end{align*}
where in the last step, we used the usual trick of splitting the bilateral sum into negative and positive parts and recombined the two into a unilateral sum over positive $n$.

Next comes $T$. The $\vartheta$-functions are standard, and come directly from from Lemma \ref{thetatestimateprelim}. The Appell sum comes from Eq. \eqref{mu2z2tbetween14and12}, which we see by making the substitution $z = r+\frac{3}{4}$, so that $0<r<\frac{1}{4}$. This gives
\begin{align*}
    T(w;q)&= -w^{-\frac{1}{2}}\frac{\vartheta\left(\frac{1}{2}+1-r_0;\tau\right)}{\vartheta(\tau;2\tau)}\mu\left(2r+2,\frac{1}{2};2\tau\right)= w^{-\frac{1}{2}}\frac{\vartheta\left(\frac{1}{2}-r_0;\tau\right)}{\vartheta(\tau;2\tau)}\mu\left(2r,\frac{1}{2};2\tau\right)\\
    &= w^{-\frac{1}{2}}\left(-i\frac{\sqrt{2}}{2}q_0^{\frac{z^2}{2}-z+\frac{7}{16}}\left(1+O\left(q_0^{z-\frac{1}{2}}\right)\right)\right)\\
    &\;\;\;\;\times\left(\frac{h\left(z-\frac{3}{4};2\tau\right)}{2i}\left(1+O\left(|\tau|^{-\frac{1}{2}}q_0^{\left(z-\frac{3}{4}\right)-\left(z-\frac{3}{4}\right)^2}\right)\right)\right)\\
    &=-w^{-\frac{1}{2}}h\left(z-\frac{3}{4};2\tau\right)\frac{\sqrt{2}}{4}q_0^{\frac{z^2}{2}-z+\frac{7}{16}}\left(1+O\left(|\tau|^{-\frac{1}{2}}q_0^{\left(z-\frac{3}{4}\right)-\left(z-\frac{3}{4}\right)^2}\right)\right).
\end{align*}
We finally study $T_2$ using the variable changes $r_1 := z-\frac{1}{2}$, so that $r_1<\frac{1}{2}$ and the previous change $r_0:= 1-z$. We apply Lemma \ref{thetatestimateprelim} which gives
\begin{align*}
    T_2(w;q) &= iw^{-\frac{1}{2}}\frac{\vartheta(4\tau;12\tau)^3}{\vartheta(2\tau;6\tau)^3}\frac{\vartheta\left(z;\tau\right)\vartheta\left(2r_1+1+\tau;2\tau\right)}{\vartheta(4r_1+2;4\tau)}\\
    &= -iw^{-\frac{1}{2}}\frac{\vartheta(4\tau;12\tau)^3}{\vartheta(2\tau;6\tau)^3}\frac{\vartheta\left(z;\tau\right)\vartheta\left(2r_1+\tau;2\tau\right)}{\vartheta(4r_1;4\tau)}\\
    &= \frac{iw}{2\sqrt{-i\tau}}q_0^{-\frac{z^2}{2}+\frac{3z}{2}-\frac{7}{8}}\left(1+O\left(q_0^{1-z}\right)\right).
\end{align*}
As before, we consider the polynomials
\begin{align*}
    f_1(z):= \frac{z^2}{2}-z+\frac{7}{16}\;\; \textnormal{and}\;\; f_2(z):= -\frac{z^2}{2}+\frac{3z}{2}-\frac{7}{8}.
\end{align*}
The function $f_1$ takes strictly negative values and is smaller on the interval $\frac{3}{4}<z<1$, which tells us that $T$ gives us the main estimate and $T_1$ and $T_2$ amount to error terms, which completes the proof.\end{proof}

\section{Proofs of Theorem \ref{maintheorem} and  Corollary \ref{logconcave}}\label{proofsection}
\hspace{5mm} The proof of Theorem \ref{maintheorem} amounts to showing that Lemma \ref{v(n)growththeorem} provides the dominant term in Eq. \eqref{orthogonality}, since the $v(a,c;n)$ are weakly increasing in $n$.
\begin{proof}[Proof of Theorem \ref{maintheorem}]
We can prove the result by recalling in the proof of Lemma \ref{v(n)growththeorem}, that the main term came from $T(1;q)$, and in particular, Eq. \eqref{overallTestzequal0}. This amounts to showing that there exists $\beta>0$ such that each of the estimates in Lemmas \ref{lessthan14near0}--\ref{between34and1} is bounded by $q_0^{\beta-\frac{1}{16}}$. This is not difficult to see by considering the polynomials of the main terms in each of these lemmas on their corresponding intervals for $z$:
\begin{align*}
&\frac{z^2}{2}-\frac{1}{16} \;\;\left(\textnormal{for}\; 0<z<\frac{1}{4}\; \textnormal{and}\; \frac{1}{4}<z<\frac{1}{2}\right),\\
&\frac{z}{2}-\frac{z^2}{2}-\frac{1}{8}\;\;\left(\textnormal{for}\; \frac{1}{2}<z<\frac{3}{4}\right),\\
&\frac{z^2}{2}-z+\frac{7}{16}\;\;\left(\textnormal{for}\; \frac{3}{4}<z<1\right).
\end{align*}
All of these polynomials are strictly bounded below by the critical value $-\frac{1}{16}$, which completes the proof.\end{proof}
We now prove Corollary \ref{logconcave}.
\begin{proof}[Proof of Corollary \ref{logconcave}]
Theorem \ref{maintheorem} and Lemma \ref{v(n)growththeorem} imply that for sufficiently large $n$
\begin{align*}
    \frac{v(a,c;n+1)v(a,c;n-1)}{v(a,c,2n)} = \frac{v(n-1)v(n+1)}{c\cdot v(2n)} = \frac{(2n)^{\frac{3}{4}} e^{\pi(\sqrt{n+1}+\sqrt{n-1}-\sqrt{2n})} }{16 c (n^2-1)^{\frac{3}{4}}}\geq 1,
\end{align*}
where the last step follows from the fact that for positive integers $\sqrt{a+b}<\sqrt{a}+\sqrt{b}$.\end{proof}

\section{Conclusion and the fringe cases for the modulus $c$}\label{conclusionandfringecases}
\hspace{5mm} We showed in this work that the odd-balanced unimodal sequences defined in \cite{kimlimlovejoy} with rank congruent to $a\pmod{c}$, $c\neq 1$ odd, are asymptotically equidistributed about a fourth root of $n$ times the overpartition function $\overline{p}(n)$. We did so by using the representation of $V(w;q)$ as a mixed mock modular form. This representation has its flaws in that the representation breaks down for $z=\frac{1}{2}$. This forces use to throw out all even $c$ since we have to sum over all residue classes in Eq. \eqref{orthogonality}. However, evaluating $V(w;q)$ for example at $w=\pm i$ is no problem as the authors of \cite{kimlimlovejoy} showed that 
\begin{align*}
    V(\pm i;q) = q^{-1}A(q),
\end{align*}
where $A(q)$ is a second order mock theta function. The function $A(q)$ can be easily exponentially bounded by $q_0^{-\frac{1}{16}}$ using its representation as an Appell sum (see the appendix of \cite{Bringmannbook} or \cite{splittingmort}). Thus, if a more manageable $q$-series representation for $V(-1;q)$ can be exploited, we expect that our main result in Theorem \ref{maintheorem} holds for all moduli $c$.

\printbibliography
\end{document}